\theoremstyle{plain}
\newtheorem{theorem}{Theorem}
\newtheorem{lemma}{Lemma}
\newtheorem{proposition}{Proposition}
\theoremstyle{remark}
\theoremstyle{definition}
\numberwithin{equation}{section}
\newcommand{\e}{\epsilon}
\newcommand{\R}{\mathbb R}
\newcommand{\C}{\mathbb C}
\newcommand{\Rn}{\mathbb R^n}
\begin{document}
\title[Kato square root problem with unbounded leading coefficients]{Kato Square Root Problem with unbounded leading coefficients}

\author[L. Escauriaza]{Luis Escauriaza}
\address[L. Escauriaza]{UPV/EHU, Dpto. Matem\'aticas, Barrio Sarriena s/n 48940 Leioa, Spain}
\email{luis.escauriaza@ehu.eus}
\thanks{L. Escauriaza is supported by grants MTM2014-53145-P and IT641-13 (GIC12/96).}

\author[S. Hofmann]{Steve
Hofmann}
\address[S. Hofmann]{Department of Mathematics,
University of
Missouri-Columbia,
Columbia, MO 65211}
\email{hofmann@math.missouri.edu}
\thanks{S. Hofmann is supported by NSF grant no. DMS-1664047. 
This material is based upon work supported by the National Science Foundation 
under Grant No. DMS- 1440140 while the authors were in residence at the Mathematical 
Sciences Research Institute in Berkeley, California, during the Spring 2017 semester.}

\subjclass[2017]{35B45, 35J15, 35J25, 35J70, 42B20, 42B37, 47A07, 47B44, 47D06}
\keywords{Kato's conjecture}

\begin{abstract} We prove the Kato conjecture for elliptic operators, $L=-\nabla\cdot\left((\mathbf A+\mathbf D)\nabla\ \right)$, with $\mathbf A$ a complex measurable bounded coercive matrix and $\mathbf D$ a measurable real-valued skew-symmetric matrix in $\R^n$ with entries in $BMO(\R^n)$;\,  i.e., the domain of $\sqrt{L}\,$ is the Sobolev space $\dot H^1(\R^n)$ in any dimension, with the estimate $\|\sqrt{L}\, f\|_2
\lesssim \| \nabla f\|_2$.
\end{abstract}
\maketitle

\begin{section}{Introduction}\label{S:1}
Let $\mathbf A=(a_{ij})$ be an $n\times n$ matrix of complex, $L^\infty$ coefficients defined on $\mathbb{R}^n$ and satisfying the ellipticity or accretivity condition
\begin{equation}\label{eq1.1} 
\lambda |\xi |^2 \leq \Re \langle\mathbf A \xi, \xi
\rangle  \equiv \Re 
\sum_{i,j}a_{ij}(x) \, \xi_j \, \overline{\xi}_i \, , \,\,\,
\|\mathbf A\|_{\infty} \leq \lambda^{-1} ,\end{equation} for $\xi \in \mathbb{C}^n$ and
for
some $0<\lambda \le 1$.  We consider a 
divergence form operator
\begin{equation}\label{eq1.2} Lu\equiv -\nabla\cdot\left(\mathbf A(x)\nabla u\right).\end{equation}
The accretivity condition \eqref{eq1.1} enables one to
define a square root $\sqrt{L}$ 
\cite{K2}
and a fundamental issue was \lq\lq solve the square root problem\rq\rq, i.e. to establish the estimate
\begin{equation}\label{eq1.3}
\| \sqrt{ L}\, f\|_{L^2(\mathbb{R}^n)}\leq N\| \nabla f\|_{L^2(\mathbb{R}^n)},
\end{equation} 
with $N$ depending on $n$ and $\lambda$. The latter estimate is connected with the question of the analyticity of the mapping $\mathbf A\to \sqrt{L}$, which in turn has applications to the perturbation theory for certain classes of hyperbolic equations  \cite{Mc1}.  We remark that
\eqref{eq1.3} is equivalent to the opposite 
inequality for the
square root of the adjoint
operator $L^{*}.$

In \cite{K1,K2} Kato conjectured that an abstract version of \eqref{eq1.3} might hold, for \lq\lq regularly accretive operators\rq\rq.  A 
counter-example
to this abstract conjecture was obtained by McIntosh 
\cite{Mc2},
who then reformulated the
conjecture in the following form, 
bearing in mind that
Kato's interest in the problem had been motivated by the 
special case
of elliptic differential operators:

{\it The estimate \eqref{eq1.3} holds for $L$ defined as in \eqref{eq1.2}, for any $L^\infty$, $n\times n$ matrix $\mathbf A$ with 
complex entries, for which \eqref{eq1.1} holds.}

To establish the validity of this conjecture became known as the Kato Problem or square root problem. In 1982 it was solved in one dimension \cite{CMcM}, where it is essentially equivalent to the problem of proving the $L^2$ boundedness of the Cauchy integral operator on Lipschitz curves \cite{KM}.  

For $ n>1,$ a restricted version of the
conjecture, also essentially posed by Kato
in \cite{K2}, was proved by P. Auscher, 
S. Hofmann, J.L. Lewis and P. Tchamitchian in \cite{Au01}. The restricted version treated the case that $\mathbf A$ is close in the $L^{\infty}$ norm to a real symmetric matrix of bounded measurable coefficients. It is this version that 
yields the perturbation
results for hyperbolic equations alluded to above \cite{Mc1}. 

Prior to the latter result, 
the conjecture was proved in
higher dimensions when $\|\mathbf A-\mathbf I\|_{L^\infty(\R^n)}\le \epsilon(n)$ \cite{CDM, FJK1,FJK2,J}. \cite{DJ} gave a different proof using the $T(1)$
theorem. Sharper bounds for the constant $\epsilon (n)$ on the order of $n^{-\frac{1}{2}}$ were obtained in \cite{J}. In \cite{AT} it was proved  when $\|\mathbf A-\mathbf I\|_{BMO(\R^n)}$ is small. 

Later, the validity of the
conjecture was established when the heat kernel of the operator $L$ 
satisfies the \lq\lq Gaussian\rq\rq property, first in 2 dimensions \cite{HMc}, and then in all dimensions \cite{HLMc}; i.e. let 
$G(x,y,t)$ denote the kernel of the operator $e^{-tL}$, we say that
$L$ 
satisfies the Gaussian property, if there are positive constants
$0<\alpha\le 1$ and $N$ such that
\begin{align*} 
(i) &\quad |G(x,y,t)|\leq N t^{-\frac n2} e^{ 
-|x-y|^2/Nt}\\
(ii) & \quad
|G(x+h,y,t)-G(x,y,t)|+|Gx,y+h,t)-G(x,y,t)|\\
&\quad \le N \left( |h|/\sqrt{t}\right)^{\alpha}t^{-\frac n2}e^{
-|x-y|^2/Nt},
\end{align*}
where
the latter holds when $t>0$ and either, $|h| \leq t,$ or $|h| \leq |x-y|/2 $.

The Gaussian property holds when $\mathbf A$ is real-valued by 
results
of Aronson
\cite{Ar} and in some cases for complex $\mathbf A$:
in two dimensions from \cite{AMcT2} and for perturbations of 
real operators 
\cite{Au}.   
 Hence, \cite{HMc}, \cite{HLMc} solve the conjecture in the 
 former two dimensional cases or the latter $n$-dimensional cases. 
 
 Finally, the conjecture was solved for general complex, 
 bounded and coercive matrices $\mathbf A$ satisfying \eqref{eq1.1} in \cite{AHLMT}. 

The purpose of this note is to show that 
minor modifications of the reasoning in \cite{AHLMT} also yield the following extension.  Let 
\begin{equation*}
H^1(\mathbb{R}^n):= \{f\in L^2(\mathbb{R}^n):\, \nabla f \in L^2(\mathbb{R}^n)\}
\end{equation*} denote the usual
Sobolev space, and $\dot{H}^1(\mathbb{R}^n)$ its homogeneous version; i.e.,
$\dot{H}^1$ is the closure of $C_0^\infty(\mathbb{R}^n)$ with respect to the seminorm
$\|f\|_{\dot{H}^1(\mathbb{R}^n)}:= \|\nabla f\|_{L^2(\mathbb{R}^n)}$.

\begin{theorem}\label{thsecondorder} For any operator
\begin{equation}\label{E: forma especial}
L=-\nabla\cdot\left((\mathbf A(x)+\mathbf D(x))\nabla\ \right)
\end{equation}
with $\mathbf A$ a bounded complex-valued coercive matrix satisfying \eqref{eq1.1} and $\mathbf D$ a real-valued skew-symmetric matrix in $\R^n$ with entries in $BMO(\R^n)$ satisfying \eqref{E: condicion2}, the domain of $\sqrt{L}\,$ contains $H^1(\R^n)$ and \eqref{eq1.3} holds over $\dot H^1(\R^n)$.
\end{theorem}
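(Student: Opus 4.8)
The plan is to run the argument of \cite{AHLMT} essentially verbatim, the only genuinely new ingredient being that the skew part $\mathbf D$ is an $L^\infty$-unbounded coefficient; the point that keeps the problem tractable is that the form of $L$ is nonetheless bounded on $\dot H^1(\R^n)$, so the structure exploited in \cite{AHLMT} survives. \textit{Step 1 (the operator is of the right type).} First I would realise $L$ in \eqref{E: forma especial} through the sesquilinear form
\begin{equation*}
\mathfrak a(u,v):=\int_{\R^n}(\mathbf A+\mathbf D)\,\nabla u\cdot\overline{\nabla v}\,dx ,\qquad u,v\in\dot H^1(\R^n),
\end{equation*}
and check that $\mathfrak a$ is bounded on $\dot H^1\times\dot H^1$ and coercive. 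For the $\mathbf A$-part both assertions follow from \eqref{eq1.1}. For the $\mathbf D$-part one antisymmetrises,
\begin{equation*}
\mathfrak a_{\mathbf D}(u,v):=\int_{\R^n}\mathbf D\,\nabla u\cdot\overline{\nabla v}\,dx=\tfrac12\sum_{i,j}\int_{\R^n}D_{ij}\bigl(\partial_j u\,\overline{\partial_i v}-\partial_i u\,\overline{\partial_j v}\bigr)\,dx ,
\end{equation*}
notes that each $\partial_j u\,\overline{\partial_i v}-\partial_i u\,\overline{\partial_j v}$ lies in the Hardy space $\mathcal H^1(\R^n)$ with norm $\lesssim\|\nabla u\|_2\|\nabla v\|_2$ by the div--curl lemma of Coifman--Lions--Meyer--Semmes, and invokes $BMO(\R^n)=\mathcal H^1(\R^n)^{*}$ together with \eqref{E: condicion2} to get $|\mathfrak a_{\mathbf D}(u,v)|\lesssim\|\mathbf D\|_{BMO}\|\nabla u\|_2\|\nabla v\|_2$; taking $v=u$ above shows $\Re\,\mathfrak a_{\mathbf D}(u,u)=0$, since a real skew matrix annihilates the real symmetric tensor $\Re\bigl(\nabla u\otimes\overline{\nabla u}\bigr)$, so $\Re\,\mathfrak a(u,u)=\Re\,\mathfrak a_{\mathbf A}(u,u)\ge\lambda\|\nabla u\|_2^2$. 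Hence $L$ is maximal accretive and sectorial, $\sqrt L$ is defined, and, because $\mathbf D^{*}=-\mathbf D$, the adjoint $L^{*}=-\nabla\cdot\bigl((\mathbf A^{*}-\mathbf D)\nabla\,\bigr)$ lies in the same class; it is therefore enough to prove \eqref{eq1.3} on $C_0^\infty(\R^n)$, the inclusion $H^1(\R^n)\subset\operatorname{dom}\sqrt L$ and the estimate over $\dot H^1(\R^n)$ following by the usual density and closure arguments.

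\textit{Step 2 (reduction to the \cite{AHLMT} square function estimate).} As in \cite{AHLMT}, \eqref{eq1.3} is equivalent to the $L^2$-boundedness of the Riesz transform $\nabla L^{-1/2}$ and, via the $T(b)$-reduction carried out there, to a quadratic estimate
\begin{equation*}
\int_0^\infty\|\Theta_t\mathbf f\|_{L^2(\R^n)}^2\,\frac{dt}{t}\;\lesssim\;\|\mathbf f\|_{L^2(\R^n)}^2
\end{equation*}
for the first-order resolvent family $\Theta_t$ built from $L$, acting on $L^2$ vector fields $\mathbf f$ (in the end $\mathbf f=\nabla f$). The substantial part of \cite{AHLMT} — the passage from this bound to a Carleson-measure estimate for the principal part $\gamma_t(x):=\Theta_t\mathbf f(x)$ tested against constant vectors, the stopping-time/sawtooth decomposition, the construction and use of the $T(b)$ test functions $f_Q^{\varepsilon}$ adapted to $L$ and $L^{*}$, and the final Carleson packing — uses $L$ only through boundedness and coercivity of the form (Step 1), the Gaffney--Davies off-diagonal decay of the families $e^{-t^2L}$, $(I+t^2L)^{-1}$, $t\nabla e^{-t^2L}$, $t\nabla(I+t^2L)^{-1}$, $t^2L e^{-t^2L}$ and their adjoints, and interior Caccioppoli estimates with the ensuing $L^2$-self-improvement for local solutions. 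Granting the last two, the arguments of \cite{AHLMT} go through without change.

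\textit{Step 3 (off-diagonal and Caccioppoli estimates with $\mathbf D\in BMO$ — the main obstacle).} This is the only place where the unboundedness of $\mathbf D$ intervenes, and it is where the work lies. In the classical derivation one tests the resolvent equation for $u=(I+t^2L)^{-1}g$ against $e^{2\phi}u$ with $\phi$ Lipschitz, $|\nabla\phi|\le\rho$ (or against $\eta^2u$ for a cutoff $\eta$); the $\mathbf D$-contribution to $\Re\,\mathfrak a(u,e^{2\phi}u)$ is $2\,\Re\!\int e^{2\phi}\,\mathbf D\,\nabla u\cdot\overline{u\,\nabla\phi}$, which has no pointwise bound since $\mathbf D\notin L^\infty$. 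I would put $v:=e^{\phi}u$ and use $\nabla u=e^{-\phi}(\nabla v-v\nabla\phi)$ together with $\mathbf D\nabla\phi\cdot\nabla\phi=0$ to rewrite this term as $2\,\Re\!\int\mathbf D\,\nabla v\cdot\overline{v\,\nabla\phi}$, whose real part equals $\tfrac12\sum_{i,j}\int D_{ij}\bigl(\partial_j(|v|^2)\,\partial_i\phi-\partial_i(|v|^2)\,\partial_j\phi\bigr)$; subtracting a constant matrix from $\mathbf D$ (legitimate because the Jacobian $\partial_j(|v|^2)\,\partial_i\phi-\partial_i(|v|^2)\,\partial_j\phi$ integrates to zero) and estimating the remainder through the John--Nirenberg inequality and $\mathcal H^1$--$BMO$ duality produces a bound of the form $\varepsilon\int e^{2\phi}|\nabla u|^2+C_\varepsilon(1+\rho^2)\int e^{2\phi}|u|^2$, the same shape as the $\mathbf A$-term contributes. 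The usual absorption then gives Gaffney exponents of the standard Gaussian type, with constants now also depending on $\|\mathbf D\|_{BMO}$, and the identical device with $\eta$ in place of $e^{\phi}$ yields Caccioppoli; it is precisely here that hypothesis \eqref{E: condicion2} makes the bookkeeping close. The construction of the test functions $f_Q^{\varepsilon}$ must likewise be rerun with this $\mathbf D$, but that again uses only coercivity and these off-diagonal/Caccioppoli bounds. I expect all the real difficulty to be concentrated in Step 3: Steps 1 and 2 are routine once the bounds of Step 3 are in hand, and the intricate combinatorial heart of \cite{AHLMT} can be quoted intact.
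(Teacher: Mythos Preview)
Your overall strategy matches the paper's: form boundedness via the div--curl lemma, coercivity from skew-symmetry, reduction to a square-function/Carleson estimate, Gaffney bounds by testing against an exponential weight, and the $T(b)$ machinery of \cite{AHLMT}. Two points, however, are less routine than you indicate, and the second is a genuine gap.

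In Step~3 your Gaffney computation does not quite close as written. With $\phi(x)=x\cdot\xi/t$ the $\mathbf D$-term becomes $\int\mathbf D\,\nabla(|v|^2)\cdot\xi/t$; your Jacobian pairs $|v|^2$ with $\phi\notin\dot H^1$, so the div--curl lemma (\eqref{E:3}) does not apply, and after ``subtracting a constant from $\mathbf D$'' you are still left with $(\mathbf D-c)$ against $\partial_i(|v|^2)\in L^1$ only. The paper closes this with a \emph{second} compensated compactness estimate, $\|f\partial_if\|_{\mathcal H^1}\lesssim\|f\|_2\|\nabla f\|_2$ for real $f$ (Proposition~\ref{E: compensated compactness}, \eqref{E:4}), applied to $f=|v|$; this is a separate input you have not invoked. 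More importantly, the claim that the \cite{AHLMT} argument ``can be quoted intact'' once Gaffney and Caccioppoli are in hand is not correct. In the analogue of \cite[Lemma~5.5]{AHLMT} one expands $L(\chi f^\e_{Q,\xi})$ and obtains commutator terms $(\mathbf A+\mathbf D)\nabla f\cdot\nabla\chi$; with $\mathbf D$ unbounded one must replace $\mathbf D$ by $\mathbf D-m_{4Q}(\mathbf D)$, and then placing $(\mathbf D-m_{4Q}(\mathbf D))\nabla f\cdot\nabla\chi$ in $L^2(4Q)$ via H\"older forces $\nabla f^\e_{Q,\xi}\in L^p(5Q)$ for some $p>2$. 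The paper secures this by a reverse H\"older/Gehring argument (again using $\mathbf D\mapsto\mathbf D-m_{B_{2r}}(\mathbf D)$ inside the Caccioppoli estimate). You mention ``$L^2$-self-improvement'' in passing, so the tool is on your radar, but its entry point is precisely inside the Carleson bound, and that term is exactly where the presence of $\mathbf D\in BMO$ forces a change to the original \cite{AHLMT} proof rather than a verbatim citation.
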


We remark in passing that the Gaussian property has been shown to hold with $\alpha=\alpha(\lambda,n)$ and 
$N=N(\lambda, n)$, when 
\begin{equation*}
Lu\equiv -\nabla\cdot\left(\mathbf (\mathbf A(x)+\mathbf D(x))\nabla u\right),
\end{equation*}
with $\mathbf A$ a real-valued, bounded symmetric and coercive matrix satisfying \eqref{eq1.1} and $\mathbf D=(d_{ij}(x))$ a real-valued skew-symmetric $BMO(\R^n)$ matrix \cite{ZhongminGuangyu17, SereginSilvestreSverakZlatos12} with
\begin{equation}\label{E: condicion2}
\|\mathbf D\|_{BMO(\R^n)}\le \lambda^{-1}.
\end{equation}
The arguments of \cite{HLMc} could  be modified to treat this restricted case.
On the other hand, as in \cite{AHLMT}, we do not require the Gaussian property in the
proof of Theorem \ref{thsecondorder} in the present paper.

We recall that a function $\beta:\R^n\longrightarrow\C$ is in $BMO(\R^n)$ or has bounded mean oscillation \cite{JohnNirenberg61}, when it is is locally integrable and  
\begin{equation*}
\|\beta\|_{BMO(\R^n)}=\sup_{Q\subset\R^n}\fint_Q|\beta-m_Q(\beta)|\,dx<+\infty,
\end{equation*}
where $Q$ ranges over all cubes in $\R^n$ with sides parallel to the coordinate axis and
\begin{equation*}
m_Q(\beta)=\fint_Q\beta\,dx.
\end{equation*}
If one defines other norms by
\begin{equation*}
\|\beta\|_{BMO(\R^n)_p}=\sup_{Q\subset\R^n}\left(\fint_Q|\beta-m_Q(\beta)|^p\,dx\right)^{\frac 1p},\ 1<p<\infty,
\end{equation*}
the John-Nirenberg inequality \cite{JohnNirenberg61} implies that all the $BMO(\R^n)_p$-norms are equivalent, when $1\le p<\infty$. Finally, $BMO(\R^n)$ is the dual of $H^1_{at}(\R^n)$, the real Hardy space in $\R^n$, where $f$ in $L^1(\R^n)$ is in $H^1_{at}(\R^n)$, when
\begin{equation*}
\sup_{\epsilon>0}|\theta_{\epsilon}\ast f|
\end{equation*}
is in $L^1(\R^n)$, where $\theta_\epsilon=\epsilon^{-n}\theta(x/\epsilon)$ and $\theta$ is any smooth non-negative compactly supported mollifier with integral equal to $1$ \cite{FeffermanCStein}. In particular, when $\beta$ is in $BMO(\R^n)$ and $f$ is in $H^1_{at}(\R^n)$, the principal value of the integral of $\beta f$ is well defined \cite{Latter} and
\begin{equation*}
\left| \int_{\R^n}\beta f\, dx\right| \le N(n)\|\beta\|_{BMO(\R^n)}\|f\|_{H^1_{at}(\R^n)},
\end{equation*}
with
\begin{equation*}
\|f\|_{H^1_{at}(\R^n)}=\|\sup_{\epsilon>0}|\theta_{\epsilon}\ast f|\|_{L^1(\R^n)}.
\end{equation*}

Following \cite{SereginSilvestreSverakZlatos12}, when
\begin{equation}\label{E: dvergencia0}
L=-\nabla\cdot\left(\mathbf A\nabla\ \right)-\mathbf b\cdot\nabla
\end{equation}
with $\mathbf A$ a complex-valued bounded matrix verifying \eqref{eq1.1} and $\mathbf b$ a real-valued divergence-free vector field with
\begin{equation}\label{E: condicicion}
\sup_{x\in\R^n,\, r>0}r\fint_{B_r(x)}|\mathbf b|\, dx< \infty,
\end{equation} 
the matrix 
\begin{equation*}
\mathbf D = \Delta^{-1}( \nabla\mathbf b-\nabla\mathbf b^{\top})
\end{equation*}
is skew-symmetric and real-valued matrix with
\begin{equation*}
\|\mathbf D\|_{BMO(\R^n)}\le N(n)\sup_{x\in\R^n,\, r>0}r\fint_{B_r(x)}|\mathbf b|\, dx
\end{equation*}
and $L$ can be written in the form \eqref{E: forma especial}. Thus, according with Theorem \ref{thsecondorder}, the domain of the square root of the accretive operator \eqref{E: dvergencia0} contains $H^1(\R^n)$, when $\mathbf A$ is as above, $\mathbf b$ is real-valued and \eqref{E: condicicion} holds.

The proof of Theorem \ref{thsecondorder} requires simple modifications to the original reasonings in \cite{AHLMT} but most importantly the following two compensated compactness-type results.
\begin{proposition}\label{E: compensated compactness} When $f,g:\R^n\longrightarrow\R$ are in $H^1(\R^n)$ and $i,j\in\{1,\dots,n\}$, the functions $\partial_if\partial_jg-\partial_jf\partial_i g$ and $f\partial_i f$ are in $H^1_{at}(\R^n)$ and there is $N=N(n)$ such that
\begin{equation}\label{E:3}
\|\partial_if\partial_jg-\partial_jf\partial_i g\|_{H^1_{at}(\R^n)}\le N\|\nabla f\|_{L^2(\R^n)}\|\nabla g\|_{L^2(\R^n)}
\end{equation}
and
\begin{equation}\label{E:4}
\|f\partial_i f\|_{H^1_{at}(\R^n)}\le N\|f\|_{L^2(\R^n)}\|\nabla f\|_{L^2(\R^n)}.
\end{equation} 
\end{proposition}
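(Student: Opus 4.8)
The plan is to exploit the classical div-curl structure underlying these quantities, together with the characterization of $H^1_{at}(\R^n)$ via the grand maximal function (or equivalently, via Riesz transforms and the duality with $BMO$). The observation is that both $\partial_i f\,\partial_j g-\partial_j f\,\partial_i g$ and $f\,\partial_i f = \tfrac12\partial_i(f^2)$ can be written as $\mathbf{u}\cdot\mathbf{v}$ where $\mathbf{u}$ is divergence free and $\mathbf{v}$ is curl free (a gradient), with both fields in $L^2(\R^n)$. Indeed, for \eqref{E:3}, fix $i\neq j$ and set $\mathbf{v}=\nabla g$ (curl free) and let $\mathbf{u}$ be the vector field whose only nonzero entries are $u_i=\partial_j f$ and $u_j=-\partial_i f$; one checks $\nabla\cdot\mathbf{u}=\partial_i\partial_j f-\partial_j\partial_i f=0$ in the sense of distributions, and $\mathbf{u}\cdot\mathbf{v}=\partial_j f\,\partial_i g-\partial_i f\,\partial_j g$, which is the claimed quantity up to sign, with $\|\mathbf{u}\|_2\le \|\nabla f\|_2$ and $\|\mathbf{v}\|_2=\|\nabla g\|_2$. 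For \eqref{E:4}, write $2f\,\partial_i f=\partial_i(f^2)$; here instead set $\mathbf{v}=\nabla f$ (curl free, $L^2$) and let $\mathbf{u}$ have the single nonzero entry $u_i=f\in L^2$, but this $\mathbf{u}$ is not divergence free, so a slightly different route is needed — see below.

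The first main step is therefore to invoke (or reprove) the Coifman--Lions--Meyer--Semmes div-curl lemma: if $\mathbf{u}\in L^2(\R^n;\R^n)$ with $\operatorname{div}\mathbf{u}=0$ and $\mathbf{v}\in L^2(\R^n;\R^n)$ with $\operatorname{curl}\mathbf{v}=0$, then $\mathbf{u}\cdot\mathbf{v}\in H^1_{at}(\R^n)$ with $\|\mathbf{u}\cdot\mathbf{v}\|_{H^1_{at}}\le N(n)\|\mathbf{u}\|_2\|\mathbf{v}\|_2$. Applied to the fields constructed above, this immediately yields \eqref{E:3}. A self-contained proof can be given by testing against $BMO$: for $\beta\in BMO(\R^n)$, one must bound $|\int \beta\,(\mathbf{u}\cdot\mathbf{v})\,dx|$; writing $\mathbf{v}=\nabla\varphi$ and integrating by parts against $\beta$ (justified by the divergence-free condition on $\mathbf{u}$, after a standard truncation/mollification argument), one reduces to $|\int (\beta-m_Q(\beta))\,\mathbf{u}\cdot\nabla\varphi\,dx|$ on appropriate scales and applies the fractional Poincar\'e / John--Nirenberg inequality together with Cauchy--Schwarz; the square-function / Littlewood--Paley packaging of this is exactly the argument in Coifman--Lions--Meyer--Semmes, and duality with $H^1_{at}$ then gives the norm bound. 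I would cite this rather than reproduce it in full.

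For \eqref{E:4}, the cleanest route is to reduce to the div-curl case by an approximation plus a dimensional trick, or more directly to observe that $f\partial_i f$ is a Jacobian-type quantity in disguise: $2f\,\partial_i f = \partial_i(f^2) = \partial_i f\cdot f$, and one can write this as $\operatorname{div}$ of the divergence-free-in-the-complementary-variables field obtained by appending a variable, but the most economical argument is simply to note that the bilinear map $(f,f)\mapsto f\partial_i f$ is the diagonal restriction of $(f,g)\mapsto \tfrac12(f\partial_i g+g\partial_i f)=\tfrac12\partial_i(fg)$, and that $\partial_i(fg)\in H^1_{at}$ with norm $\lesssim \|fg\|_{L^{1}}^{?}$ — this does not quite work dimensionally, so instead I would argue directly via the maximal-function definition: estimate $\sup_{\e>0}|\theta_\e * (f\partial_i f)|$ pointwise. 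Integrating by parts moves the derivative onto $\theta_\e$, giving $|\theta_\e*(f\partial_i f)| = \tfrac12|(\partial_i\theta_\e)*(f^2)| \lesssim \e^{-1}\,\mathcal{M}(f^2)$ where $\mathcal{M}$ is Hardy--Littlewood maximal — but this has no $L^1$ control from $f^2\in L^1$ alone because of the $\e^{-1}$; the resolution, which is the heart of the matter, is to split $f^2$ using $f\in L^2$ and use the improved cancellation: $|\theta_\e * (f\partial_i f)|\le |\theta_\e*((f-f_\e)\partial_i f)| + |f_\e|\,|\theta_\e*\partial_i f|$ where $f_\e$ is a local average, and on the first term use $|f-f_\e|\lesssim \e\,\mathcal{M}(\nabla f)$ (Poincar\'e pointwise) to kill the $\e^{-1}$, obtaining a pointwise bound by $\mathcal{M}(f)\cdot\mathcal{M}(\nabla f) + \mathcal{M}(\nabla f)\cdot\mathcal{M}(f)$ type products, whose $L^1$ norm is controlled by $\|\mathcal{M}(f)\|_2\|\mathcal{M}(\nabla f)\|_2\lesssim \|f\|_2\|\nabla f\|_2$ via Cauchy--Schwarz and the $L^2$-boundedness of $\mathcal{M}$.

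The main obstacle is precisely this last point: the naive maximal-function estimate for $f\partial_i f$ loses a factor of $\e^{-1}$, so the proof genuinely needs the interplay between the $L^2$ bound on $f$ and the $L^2$ bound on $\nabla f$ — one cannot treat $f\partial_i f$ as a generic product in $L^1$. The div-curl lemma for \eqref{E:3} handles the antisymmetric combination because the antisymmetry \emph{is} the divergence-free condition, but \eqref{E:4} has no such antisymmetry and must be extracted from the factorization $2f\partial_i f=\partial_i(f^2)$ plus the pointwise Poincar\'e trick above. I expect the write-up to spend most of its effort making the truncation in the div-curl argument rigorous (so that the integration by parts against $BMO$ is legitimate for merely $L^2$ fields) and making the $f-f_\e$ splitting for \eqref{E:4} quantitative.
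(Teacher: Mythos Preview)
The paper does not give its own proof of this proposition: immediately after the statement it writes ``The reader can find the proofs of \eqref{E:3} and \eqref{E:4} in \cite{CoifmanLionsMeyerSemmes93} and \cite{ZhongminGuangyu17} respectively.''  So there is no argument in the paper to match against; your proposal already goes further than the paper does.

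Your treatment of \eqref{E:3} is exactly the div--curl lemma of Coifman--Lions--Meyer--Semmes that the paper cites: the field $\mathbf u$ with $u_i=\partial_j f$, $u_j=-\partial_i f$ and all other entries zero is divergence free, $\nabla g$ is curl free, and $\mathbf u\cdot\nabla g$ is the quantity in question up to sign.  That part is correct and aligns with the cited source.

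For \eqref{E:4} your maximal-function strategy is the right one, but the specific split you wrote does not close.  In
\[
\theta_\e*(f\partial_i f)=\theta_\e*\big((f-f_\e)\partial_i f\big)+f_\e\,\theta_\e*(\partial_i f),
\]
the second term is indeed $\lesssim \mathcal M(f)\,\mathcal M(\nabla f)$, but the first term carries no $\e^{-1}$ to be ``killed''.  Since $(f-f_\e)\partial_i f=\tfrac12\,\partial_i\big((f-f_\e)^2\big)$, integrating by parts produces one $\e^{-1}$ while Poincar\'e on $(f-f_\e)^2$ gains $\e^2$; the net is a factor $\e$ in front of $\mathcal M_q(|\nabla f|)^2$, which is unbounded in the supremum over $\e$.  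The repair is to integrate by parts \emph{before} splitting and subtract a constant inside the square:
\[
\theta_\e*(f\partial_i f)=\tfrac12\,(\partial_i\theta_\e)*\big(f^2-f_\e^2\big)=\tfrac12\,(\partial_i\theta_\e)*\big((f-f_\e)(f+f_\e)\big),
\]
so that one factor carries oscillation and the other size.  H\"older with exponents $p\in\big(\tfrac{2n}{n+2},2\big)$ and $p'$, together with the Sobolev--Poincar\'e inequality $\big(\fint_{B_\e}|f-f_\e|^{p'}\big)^{1/p'}\lesssim \e\,\big(\fint_{B_\e}|\nabla f|^{q}\big)^{1/q}$ with $\tfrac1q=\tfrac1{p'}+\tfrac1n<\tfrac12$, yields the uniform pointwise bound
\[
\sup_{\e>0}\big|\theta_\e*(f\partial_i f)\big|\ \lesssim\ \mathcal M_q(\nabla f)\,\mathcal M_p(f),\qquad p,q<2,
\]
and Cauchy--Schwarz plus the $L^2$-boundedness of $\mathcal M_p$, $\mathcal M_q$ give \eqref{E:4}.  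This is presumably the argument in \cite{ZhongminGuangyu17}; your sketch had the right mechanism but the wrong bookkeeping of powers of $\e$.
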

The reader can find the proofs of \eqref{E:3} and \eqref{E:4} in \cite{CoifmanLionsMeyerSemmes93} and \cite{ZhongminGuangyu17} respectively. 

In the next section we explain the minor modifications one must make to the reasonings in the proof of the Conjecture 1.4  in \cite{AHLMT}, to derive its extension in Theorem \ref{thsecondorder}. Throughout the next pages $N$ denotes a constant which depends at most on $\lambda$ and $n$, $B_r$ an open ball in $\R^n$ of radius $r>0$, $Q$ a cube in $\R^n$ with sides parallel to the coordinate axis, $x_Q$ its center and $\delta(Q)$ its side length.

\end{section}

\begin{section}{Proof of Theorem \ref{thsecondorder}}

Setting
\begin{equation*}
\langle f\text{,}\ g\rangle=\int_{\R^n} f(x)\overline g(x)\,dx,\ \text{for}\ f,g\in L^2(\R^n),
\end{equation*}
it follows from H\"older's inequality, \eqref{eq1.1}, \eqref{E: condicion2}, the identity
\begin{equation}\label{E_skesymmetry}
\int_{\R^n}\mathbf D(x)\nabla u\cdot\nabla\overline v\,dx=\frac 12\int_{\R^n}d_{ij}(x)\left(\partial_iu\partial_j\overline v-\partial_ju\partial_i\overline v\right)\,dx,
\end{equation}
\eqref{E:3} and the fact that 
\begin{equation*}
\Re{\mathbf D(x)\xi\cdot\overline\xi}=0,\  \text{when}\ x\in R^n\ \text{and}\ \xi\in\C^n,
\end{equation*}
that the sesquilinear form
\begin{equation*}
\mathcal B(u,v)=\int_{\R^n}\left(\mathbf A(x)+\mathbf D(x)\right)\nabla u\cdot\nabla\overline v\, dx,
\end{equation*}
associated to the unbounded operator  $L$ in \eqref{E: forma especial}, with domain 
\begin{equation*}
\mathcal D(L)=\{ u\in H^1(\R^n): Lu\in L^2(\R^n)\,\}
\end{equation*}
and by the relation
\begin{equation*}
\langle Lu,v\rangle= {\mathcal B(u,v)},\ \text{when}\ u\in\mathcal D(L)\ \text{and}\ v\in H^1(\R^n),
\end{equation*}
 is bounded and coercive on $H^1(\R^n)$ with
\begin{equation*}
|\mathcal B(u,v)|\le N\|\nabla u\|_{L^2(\R^n)}\|\nabla v\|_{L^2(\R^n)}
\end{equation*}
and
\begin{equation*}
\Re{\mathcal B(u,u)}\ge \lambda\int_{\R^n}|\nabla u|^2\,dx,\ \text{when}\ u\in H^1(\R^n),
\end{equation*}
when the matrices $\mathbf A$ and $\mathbf D$ satisfy  the conditions in Theorem \ref{thsecondorder}. $L$ is an accretive unbounded operator,
\begin{equation*}
\Re\langle Lu,u\rangle =\Re\mathcal B(u,u)\ge 0,\ \text{when}\ u\in\mathcal D(L);
\end{equation*}
 $L$ is also m-accretive \cite[p. 279]{K1} and the operators
 \begin{equation}\label{E: operadores basicos}
(1+t^2L)^{-1},\quad t\nabla(1+t^2L)^{-1},\quad (1+t^2L)^{-1}t\,\nabla\cdot\quad \text{and}\quad
t^2\nabla(1+t^2L)^{-1}\nabla\cdot
\end{equation}
are uniformly  $L^2(\R^n)$-bounded with
bounds depending $n$ and $\lambda$ for all $t>0$, where
\begin{equation*}
u=\left(1+t^2L\right)^{-1}f\quad \text{and}\quad  w=\left(1+t^2L\right)^{-1}t\nabla\cdot \mathbf f
\end{equation*}
are the unique Lax-Milgram weak solutions in $H^1(\R^n)$ satisfying respectively
\begin{equation}\label{E:formabilineal}
\int_{\R^n} u\overline v+t^2\left(\mathbf A+\mathbf D\right)\nabla u\cdot\overline{\nabla v}\, dx =\int_{\R^n}f\,\overline v\, dx
\end{equation}
and
\begin{equation}\label{E:formabilineal3}
\int_{\R^n} w\overline v+t^2\left(\mathbf A+\mathbf D\right)\nabla w\cdot\overline{\nabla v}\, dx =-t\int_{\R^n}\mathbf f\cdot\nabla \overline v\, dx,
\end{equation}
for all $v$ in $C_0^\infty(\R^n)$, when $f$ and $\mathbf f$ are in $L^2(\R^n)$. Similar bounds hold when $L$ is replaced above by the adjoint of $L$
\begin{equation*}
L^{*}= -\nabla\cdot\left(\left(\mathbf A^{*}-\mathbf D\right)\nabla\  \right),
\end{equation*}
where $\mathbf A^{*}$ denotes the transpose conjugate matrix of $\mathbf A$.

Following T. Kato \cite[p. 281]{K1}, $L$ has a unique m-accretive square root $\sqrt{L}$ given by
\begin{equation*}
\sqrt{L}f=\tfrac 1\pi\int_0^{+\infty}\lambda^{-\frac 12}(\lambda+L)^{-1}Lf\,d\lambda,\ \quad\text{when}\ f\in\mathcal D(L).
\end{equation*}
The identities
 \begin{equation*}
 (\lambda+L)^{-k-1}=-\frac 1k\frac{d}{d\lambda}(\lambda+L)^{-k}, \quad \lambda >0,
 \end{equation*}
 for $k= 1, 2$, integration by parts and the change of variables $\lambda=1/t^2$, show that
 \begin{equation*}
\sqrt{L}f=\tfrac 8\pi\int_0^{+\infty}(1+t^2L)^{-3}t^3L^2f\, \tfrac{dt}{t},\ \quad\text{when}\ f\in\mathcal D(L^2)=(1+L)^{-1}\mathcal D(L)
\end{equation*}
and as in \cite{AHLMT}, we use the latter resolution formula for $\sqrt{L}$ to prove Theorem \ref{thsecondorder}.  

As is \cite{AHLMT}, Theorem \ref{thsecondorder} follows once \eqref{eq1.3} is derived for $f$ in a dense subspace of $H^1(\R^n)$, as $\mathcal D(L^2)$ (here, $L^2 = L\,\circ\, L$); because then \eqref{E: paso fundamental}, \eqref{E: otra importante} and the closedness of $\sqrt{L}$ as an unbounded operator over $L^2(\R^n)$, show that $H^1(\R^n)$ is contained  the domain of $\sqrt{L}$ and \eqref{eq1.3} holds for $f$ in $H^1(\R^n)$. Finally, $H^1(\R^n)$ is dense in $\dot H^1(\R^n)$ and $\sqrt{L}$ can be uniquely extended by density to $\dot H^1(\R^n)$.

We have
\begin{equation*}
\begin{split}
|\langle \sqrt{L}f,g\rangle |&=\left |\int_0^{+\infty}\langle\left(1+t^2L\right)^{-1}tLf\text{,}\left(1+t^2L^{*}\right)^{-2}t^2L^{*}g\rangle\tfrac{dt}t\right |\\
&\le\left(\int_0^{+\infty}\|\left(1+t^2L\right)^{-1}tLf\|_{L^2(\R^n)}^2\tfrac{dt}t\right)^{\frac12}\\
&\times \left(\int_0^{+\infty}\|\left(1+t^2L^{*}\right)^{-2}t^2L^{*}g\|_{L^2(\R^n)}^2\tfrac{dt}t\right)^{\frac 12}
 \end{split}
\end{equation*}
and
\begin{equation}\label{E: unanecesaria}
\left(\int_0^{+\infty}\|\left(1+t^2L^{*}\right)^{-2}t^2L^{*}g\|_{L^2(\R^n)}^2\tfrac{dt}t\right)^{\frac12}\le N\|g\|_{L^2(\R^n)}.
\end{equation}
To verify the later inequality, define 
\begin{equation*}
\mathcal S_t=\left(1+t^2L\right)^{-2}t^2L= \left(1+t^2L\right)^{-1}-\left(1+t^2L\right)^{-2}.
\end{equation*}
By duality
\begin{equation*}
\begin{split}
&\int_0^{+\infty}\|\left(1+t^2L^{*}\right)^{-2}t^2L^{*}g\|_{L^2(\R^n)}^2\tfrac{dt}t=\langle \int_0^{+\infty}\mathcal S_t\mathcal S_t^{*}g\tfrac{dt}t,g\rangle\\
&\le \|\int_0^{+\infty}\mathcal S_t\mathcal S_t^{*}g\tfrac{dt}t\|_{L^2(\R^n)}\|g\|_{L^2(\R^n)}
\end{split}
\end{equation*}
and because the operators $\mathcal S_t$ are uniformly bounded in $\mathcal B(L^2(\R^n))$ and
\begin{equation}\label{E:otras acotaciones}
\|\mathcal S_t^{*}\mathcal S_s\|_{\mathcal B(L^2(\R^n))}\le N\min{\{t/s,s/t\}},\ \text{when}\  s,t >0,
\end{equation}
Cotlar's Lemma for integrals \cite{Cotlar74,Stein93} and \eqref{E:otras acotaciones} imply that 
\begin{equation*}
\|\int_0^{+\infty}\mathcal S_t\mathcal S_t^{*}g\tfrac{dt}t\|_{L^2(\R^n)}\le N\|g\|_{L^2(\R^n)},
\end{equation*}
which gives \eqref{E: unanecesaria}. Thus,
\begin{equation}\label{E: paso fundamental}
\|\sqrt{L}f\|_{L^2(\R^n)}\le N\left(\int_0^{+\infty}\|\left(1+t^2L\right)^{-1}tLf\|_{L^2(\R^n)}^2\tfrac{dt}t\right)^{\frac12},
\end{equation}
provided that \eqref{E:otras acotaciones} holds. Write then for $t,s >0$,
\begin{equation*}
\begin{split}
&\langle\mathcal S^{*}_t\mathcal S_s f,g\rangle\\
&= \langle\left(1+s^2L\right)^{-2} s^2Lf,\left(1+t^2L\right)^{-2}t^2Lg\rangle\\
&= \langle s^2L\left(1+s^2L\right)^{-2} f,\left(1+t^2L\right)^{-1}g-\left(1+t^2L\right)^{-2}g\rangle\\
&= -\frac st\ \langle \left(\mathbf A+\mathbf D\right)s\nabla\left(1+s^2L\right)^{-2}f, t\nabla\left(1+t^2L\right)^{-1}g-t\nabla\cdot\left(1+t^2L\right)^{-2}g \rangle\\
&= -\frac ts\ \langle s\nabla\left(1+s^2L\right)^{-1}f-s\nabla\cdot\left(1+s^2L\right)^{-2}f, \left(\mathbf A+\mathbf D\right)t\nabla\left(1+t^2L\right)^{-2}g \rangle
\end{split}
\end{equation*}
and use H\"older's inequality, \eqref{E_skesymmetry}, \eqref{E: operadores basicos} and \eqref{E:3} to derive  \eqref{E:otras acotaciones}, from the previous identities.

The next goal is to show that the operator 
\begin{equation}\label{L: eloperador}
\mathcal T_t=\left(1+t^2L\right)^{-1}t^2L =I-\left(1+t^2L\right)^{-1}
 \end{equation}
 has Gaffney bounds and a well defined action over $L^\infty(\R^n)$ and the space of Lipschitz functions over $\R^n$. To show it we prove first the following Lemma.
 \begin{lemma}\label{L: Gaffney}
 There are $\theta=\theta(\lambda,n)$ and $N$ such that the inequalities
\begin{equation*}
\|e^{x\cdot\xi/t}\left(1+t^2L\right)^{-1}f\|_{L^2(\R^n)}+\|e^{x\cdot\xi/t} t\nabla\left(1+t^2L\right)^{-1}f\|_{L^2(\R^n)}\le N\|e^{x\cdot\xi/t}f\|_{L^2(\R^n)},
\end{equation*}
\begin{equation*}
\begin{split}
\|e^{x\cdot\xi/t}\left(1+t^2L\right)^{-1}t\nabla\cdot\mathbf f\|_{L^2(\R^n)}+\|e^{x\cdot\xi/t} t^2\nabla\left(1+t^2L\right)^{-1}\nabla\cdot\mathbf f&\|_{L^2(\R^n)}\\
&\le N\|e^{x\cdot\xi/t}\mathbf f\|_{L^2(\R^n)},
\end{split}
\end{equation*}
hold when  $\xi$ is in $\R^n$ and $|\xi|\le\theta$.
 \end{lemma}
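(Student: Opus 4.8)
The plan is to obtain the exponentially weighted (Gaffney) estimates by the standard ``conjugation by $e^{\xi\cdot x/t}$'' device, testing the defining sesquilinear form \eqref{E:formabilineal}, \eqref{E:formabilineal3} against the weighted solution itself and absorbing the lower-order terms for $|\xi|$ small.  Fix $\xi\in\R^n$ with $|\xi|\le\theta$ to be chosen, and write $\phi=\xi\cdot x/t$; note $t\nabla\phi=\xi$, so all the factors of $t$ are exactly calibrated to the scaling of the resolvent operators in \eqref{E: operadores basicos}.  For $u=(1+t^2L)^{-1}f$ let $u_\phi=e^{\phi}u$; then $e^{\phi}\nabla u=\nabla u_\phi-t^{-1}\xi\, u_\phi$, and inserting $v=e^{2\phi}u=e^{\phi}u_\phi$ (suitably truncated and then passed to the limit, to stay in $C_0^\infty$) into \eqref{E:formabilineal} yields, after moving the $e^{\phi}$'s onto both gradients,
\begin{equation*}
\int_{\R^n}|u_\phi|^2+t^2(\mathbf A+\mathbf D)(\nabla u_\phi-\tfrac\xi t u_\phi)\cdot(\overline{\nabla u_\phi}-\tfrac{\xi}{t}\overline{u_\phi})\,dx=\int_{\R^n}f_\phi\,\overline{u_\phi}\,dx,
\end{equation*}
where $f_\phi=e^{\phi}f$.

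Taking real parts, the coercivity \eqref{eq1.1} of $\mathbf A$ together with $\Re\langle\mathbf D\zeta,\zeta\rangle=0$ gives a lower bound $\|u_\phi\|_2^2+\lambda t^2\|\nabla u_\phi\|_2^2$ on the left, modulo the cross terms $t^2\Re\langle(\mathbf A+\mathbf D)\nabla u_\phi,-\tfrac\xi t\overline{u_\phi}\rangle$, $t^2\Re\langle(\mathbf A+\mathbf D)(-\tfrac\xi t u_\phi),\overline{\nabla u_\phi}\rangle$ and the zeroth-order term $t^2|\xi|^2\Re\langle(\mathbf A+\mathbf D)\tfrac{u_\phi}{t},\tfrac{\overline{u_\phi}}{t}\rangle=|\xi|^2\Re\langle(\mathbf A+\mathbf D)u_\phi,\overline{u_\phi}\rangle$.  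The first-order cross terms are bounded by $C|\xi|\,t\|\nabla u_\phi\|_2\|u_\phi\|_2\le \tfrac{\lambda}{2}t^2\|\nabla u_\phi\|_2^2+C\lambda^{-1}|\xi|^2\|u_\phi\|_2^2$, and the zeroth-order term is $\le\lambda^{-1}|\xi|^2\|u_\phi\|_2^2$ (using $\|\mathbf A\|_\infty\le\lambda^{-1}$ and the skew part contributing $0$ to the real part).  Hence for $|\xi|\le\theta$ with $\theta=\theta(\lambda,n)$ small enough to force $C\lambda^{-1}\theta^2\le\tfrac12$, these terms are absorbed into $\tfrac12\|u_\phi\|_2^2+\tfrac{\lambda}{2}t^2\|\nabla u_\phi\|_2^2$, and the right side is handled by Cauchy--Schwarz, $|\langle f_\phi,u_\phi\rangle|\le\tfrac14\|u_\phi\|_2^2+\|f_\phi\|_2^2$.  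This yields $\|u_\phi\|_2^2+t^2\|\nabla u_\phi\|_2^2\lesssim\|f_\phi\|_2^2$, which is exactly the first displayed inequality of the Lemma.  (One must justify that the weighted quantities are finite a priori so that the absorption is legitimate; this is done by first running the argument with $\phi$ replaced by a bounded truncation $\phi_R=\max(-R,\min(R,\phi))$, obtaining a bound uniform in $R$, and letting $R\to\infty$ by monotone convergence.)

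The second pair of inequalities, for $w=(1+t^2L)^{-1}t\nabla\cdot\mathbf f$ solving \eqref{E:formabilineal3}, is entirely parallel: test against $e^{2\phi}w$, write $e^{\phi}\nabla w=\nabla w_\phi-\tfrac\xi t w_\phi$ on the left and also on the right, where the right side becomes $-t\int e^{\phi}\mathbf f\cdot(\overline{\nabla w_\phi}-\tfrac\xi t\overline{w_\phi})\,dx=-t\int\mathbf f_\phi\cdot\overline{\nabla w_\phi}\,dx+\int\mathbf f_\phi\cdot\xi\,\overline{w_\phi}\,dx$.  The first right-hand term is $\le\tfrac{\lambda}{4}t^2\|\nabla w_\phi\|_2^2+C\lambda^{-1}\|\mathbf f_\phi\|_2^2$ and the second is $\le\tfrac14\|w_\phi\|_2^2+|\xi|^2\|\mathbf f_\phi\|_2^2$; combined with the same absorption of the $\xi$-cross terms on the left as before, this gives $\|w_\phi\|_2+t\|\nabla w_\phi\|_2\lesssim\|\mathbf f_\phi\|_2$, which contains both stated bounds (the second being the $t\nabla$ of $w$).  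The only genuine subtlety is the one already flagged: ensuring the exponential weights do not a priori make the energies infinite, which forces the truncation/limiting step and a short argument that $u,w\in H^1$ with $e^{\phi}u,e^{\phi}w\in L^2_{loc}$ uniformly, after which the quantitative bound closes the matter.  Apart from that, everything is a direct consequence of \eqref{eq1.1}, \eqref{E: condicion2}, the Lax--Milgram representations \eqref{E:formabilineal}--\eqref{E:formabilineal3}, and Cauchy--Schwarz.
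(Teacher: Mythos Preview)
Your overall scheme (test the weak formulation against $e^{2\phi}u$, take real parts, absorb the $\xi$-terms for $|\xi|$ small) is indeed the right one, and it is also the paper's approach. But there is a genuine gap in the step where you bound the first-order cross terms.

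First, a sign slip: with $v=e^{2\phi}u=e^{\phi}u_\phi$ one has $\nabla\overline v=e^{\phi}\big(\nabla\overline{u_\phi}+\tfrac{\xi}{t}\overline{u_\phi}\big)$, so the second factor in your displayed identity should carry $+\tfrac{\xi}{t}\overline{u_\phi}$, not $-\tfrac{\xi}{t}\overline{u_\phi}$. This matters, because with the correct sign the two $\mathbf D$-cross terms do \emph{not} cancel in real part; instead, using the skew-symmetry of $\mathbf D$ and that $\xi$ is real, they combine to
\[
t\,\Re\!\int_{\R^n}\mathbf D\nabla u_\phi\cdot\xi\,\overline{u_\phi}
-\mathbf D(\xi u_\phi)\cdot\overline{\nabla u_\phi}\,dx
\,=\,-\,t\!\int_{\R^n}\mathbf D\xi\cdot\nabla\bigl(|u_\phi|^2\bigr)\,dx.
\]
You then assert that the cross terms are bounded by $C|\xi|\,t\,\|\nabla u_\phi\|_2\|u_\phi\|_2$. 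For the $\mathbf A$-part this follows from $\|\mathbf A\|_\infty\le\lambda^{-1}$, but for the $\mathbf D$-part there is no such bound available: $\mathbf D$ is only in $BMO(\R^n)$, not in $L^\infty$, and Cauchy--Schwarz alone does not control $\int \mathbf D\nabla u_\phi\cdot\xi\,\overline{u_\phi}\,dx$. This is precisely the place where the unboundedness of the leading coefficients enters, and your argument does not address it.

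The paper closes this gap by invoking the compensated-compactness estimate \eqref{E:4} of Proposition~\ref{E: compensated compactness}: writing $|u_\phi|^2$ in terms of real and imaginary parts, each $g\,\partial_j g$ lies in $H^1_{\rm at}(\R^n)$ with $\|g\,\partial_j g\|_{H^1_{\rm at}}\le N\|g\|_2\|\nabla g\|_2$, and then the $BMO$--$H^1_{\rm at}$ duality gives
\[
\Bigl|\,t\!\int_{\R^n}\mathbf D\xi\cdot\nabla(|u_\phi|^2)\,dx\,\Bigr|
\;\le\; N\,|\xi|\,\|\mathbf D\|_{BMO}\,\|u_\phi\|_2\,\|t\nabla u_\phi\|_2,
\]
which is exactly the estimate you need to absorb for small $|\xi|$. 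Your proof becomes correct once you insert this step (and fix the sign).

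A secondary difference: to guarantee a priori finiteness of the weighted norms, the paper works first with $L$ on $H^1_0(B_R)$ (so that $e^{\phi}u\in H^1_0(B_R)\subset H^1(\R^n)$ automatically, making \eqref{E:4} directly applicable), obtains \eqref{E: casicai} uniformly in $R$, and then lets $R\to\infty$ via weak $L^2_{\rm loc}$-convergence. Your proposed truncation of $\phi$ is a reasonable alternative in spirit, but note that for the $\mathbf D$-term you still need the function to which you apply \eqref{E:4} to lie in $H^1(\R^n)$; the bounded-domain approximation handles this cleanly.
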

 \begin{proof}
 We first prove the Lemma when the domain of $L$ is replaced by
 \begin{equation*}
\mathcal D(L)=\{ f\in H^1_0(\Omega): Lf\in L^2(\Omega)\},
\end{equation*}
where $\Omega$ is a bounded domain in $\R^n$ and $L^2(\R^n)$ is replaced by $L^2(\Omega)$ in Lemma \ref{L: Gaffney}. In that case, when $f$ is in $L^2(\Omega)$, $u=\left(1+t^2L\right)^{-1}f$   is the unique Lax-Milgram weak solution in $H^1_0(\Omega)$, which satisfies
\begin{equation}\label{E:formabilineal2}
\int_{\Omega} u\overline v+t^2\left(\mathbf A+\mathbf D\right)\nabla u\cdot\overline{\nabla v}\, dx =\int_{\Omega}f\,\overline v\, dx,
\end{equation}
for all $v$ in $H^1_0(\Omega)$. Take then $v=e^{2x\cdot\xi/t}u$ in \eqref{E:formabilineal2} to find that
\begin{equation*}
\int_{\Omega}e^{2x\cdot\xi/t} \left[|u|^2+t^2\left(\mathbf A+\mathbf D\right)\nabla u\cdot\left(\nabla\overline u+2(\xi/ t)\overline u\right)\right]\, dx=\int_{\Omega}e^{2x\cdot\xi/t}f\overline u\,dx.
\end{equation*} 
Taking real parts, we get
\begin{equation*}
\begin{split}
&\|e^{x\cdot\xi/t}u\|_{L^2(\Omega)}^2+\lambda\|e^{x\cdot\xi/t}t\nabla u\|_{L^2(\Omega)}^2\le \|e^{x\cdot\xi/t}f\|_{L^2(\Omega)}\|e^{x\cdot\xi/t}u\|_{L^2(\Omega)}\\
&+\lambda^{-1}|\xi|\|e^{x\cdot\xi/t}t\nabla u\|_{L^2(\Omega)}\|e^{x\cdot\xi/t}u\|_{L^2(\Omega)}-t\int_\Omega\mathbf D\nabla\left(|e^{x\cdot\xi/t}u|^2\right)\cdot\xi\,dx.
\end{split}
\end{equation*}
According with \eqref{E:4} and because $e^{x\cdot\xi/t}u$ is in $H^1_0(\Omega)\subset H^1(\R^n)$, the absolute value of the last integral above is bounded by
\begin{equation*}
\lambda^{-1}|\xi|\|e^{x\cdot\xi/t}u\|_{L^2(\Omega)}\|e^{x\cdot\xi/t}t\nabla u\|_{L^2(\Omega)}+|\xi|^2\|e^{x\cdot\xi/t}u\|_{L^2(\Omega)}^2
\end{equation*}
and the inequality
 \begin{equation}\label{E: casicai}
 \|e^{x\cdot\xi/t}u\|_{L^2(\Omega)}+\|e^{x\cdot\xi/t} t\nabla u\|_{L^2(\Omega)}\le N\|e^{x\cdot\xi/t}f\|_{L^2(\Omega)},
 \end{equation}
 follows, when $|\xi|\le\theta$, $\xi$ is in $R^n$ and $\theta$ is sufficiently small.
  
 For $f$ in $C_0^\infty(\R^n)$ with the support of $f$ contained in $B_R$, let $u_R$ denote the Lax-Milgram weak solution to \eqref{E:formabilineal2}, when $\Omega=B_R$. Because $\mathbf D\nabla\varphi$ is in $L^2(\R^n)$, when $\varphi$ is in $C_0^\infty(\R^n)$ and the bounds that we have for $u_R$ and $\nabla u_R$ are independent of $R\ge 1$, we can derive that $u_R$ converges to $u$ in $L^2_{loc}(\R^n)$ and $\nabla u_R$ converges weakly to $\nabla u$ in $L^2_{loc}(\R^n)$, where now $u= \left(1+t^2L\right)^{-1}f$ is as in Lemma \ref{L: Gaffney}. The first part of the Lemma now follows from \eqref{E: casicai} and the local weak convergences of $e^{x\cdot\xi/t}u_R$ and $e^{x\cdot\xi/t}\nabla u_R$ to $e^{x\cdot\xi/t}u$ and $e^{x\cdot\xi/t}\nabla u$, when $R$ tends to infinity.
 
The second part of the Lemma follows after replacing \eqref{E:formabilineal2} by \eqref{E:formabilineal3} and taking $v=e^{2x\cdot\xi/t}w$.
 \end{proof}
  \begin{lemma}\label{L: Gaffney2}
 There is $N$ such that the following inequalities hold for all cubes $Q$ in $\R^n$ with side length $\delta(Q)$ and $t>0$
 \begin{equation*}
 \|\left(1+t^2L\right)^{-1}f\|_{L^2(Q)}+\|t\nabla\left(1+t^2L\right)^{-1}f\|_{L^2(Q)}\le Ne^{-2^k\delta(Q)/Nt}\|f\|_{L^2(\R^n)},
 \end{equation*}
  \begin{equation*}
 \|\left(1+t^2L\right)^{-1}t\nabla\cdot\mathbf f\|_{L^2(Q)}+\|t^2\nabla\left(1+t^2L\right)^{-1}\nabla\cdot\mathbf f\|_{L^2(Q)}\le Ne^{-2^k\delta(Q)/Nt}\|\mathbf f\|_{L^2(\R^n)},
 \end{equation*}
 when  the supports of $f$ and $\mathbf f$ are contained in $2^{k+1}Q\setminus 2^kQ$ and $k\ge 1$.
 \end{lemma}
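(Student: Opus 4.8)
The plan is to obtain both inequalities from the exponentially weighted bounds of Lemma~\ref{L: Gaffney} by choosing the auxiliary vector $\xi$ to point \emph{away} from the region carrying $f$. The one geometric subtlety is that the support of $f$, lying in $2^{k+1}Q\setminus 2^kQ$, surrounds $Q$ on all sides, so no single linear weight $e^{x\cdot\xi/t}$ can be large on $Q$ while being small on all of $\operatorname{supp}f$. To get around this I would first decompose $f=\sum_{i=1}^n\big(f_i^++f_i^-\big)$ into at most $2n$ pieces: since $f$ vanishes on $2^kQ$, every point $x\in\operatorname{supp}f$ satisfies $|x_i-(x_Q)_i|\ge 2^{k-1}\delta(Q)$ for some $i$, so one may take $f_i^{\pm}=f\,\mathbf 1_{A_i^{\pm}}$ for a measurable partition $\{A_i^{\pm}\}$ of $\operatorname{supp}f$ with $A_i^{\pm}\subseteq\{x:\pm(x_i-(x_Q)_i)\ge 2^{k-1}\delta(Q)\}$; disjointness then gives $\sum_{i,\pm}\|f_i^{\pm}\|_{L^2(\Rn)}^2=\|f\|_{L^2(\Rn)}^2$.

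Fix one piece, say $f_i^+$, write $g=(1+t^2L)^{-1}f_i^+$, and apply the first estimate of Lemma~\ref{L: Gaffney} with $\xi=-\theta e_i$ (admissible since $|\xi|=\theta$). Pulling the scalar weight $e^{-x\cdot\xi/t}$ out of $\|g\|_{L^2(Q)}$ and of $\|t\nabla g\|_{L^2(Q)}$, and then pulling $e^{y\cdot\xi/t}$ out of $\|e^{x\cdot\xi/t}f_i^+\|_{L^2(\Rn)}=\|e^{x\cdot\xi/t}f_i^+\|_{L^2(\operatorname{supp}f_i^+)}$, gives
\[
\|g\|_{L^2(Q)}+\|t\nabla g\|_{L^2(Q)}\le N\Big(\sup_{x\in Q}e^{-x\cdot\xi/t}\Big)\Big(\sup_{y\in\operatorname{supp}f_i^+}e^{y\cdot\xi/t}\Big)\|f_i^+\|_{L^2(\Rn)}.
\]
For $\xi=-\theta e_i$ the first supremum equals $e^{\theta((x_Q)_i+\delta(Q)/2)/t}$ and the second is at most $e^{-\theta((x_Q)_i+2^{k-1}\delta(Q))/t}$, so their product is at most $e^{-\theta(2^{k-1}-1/2)\delta(Q)/t}$; since $2^{k-1}-1/2\ge 2^{k-2}$ for every integer $k\ge1$, this is at most $e^{-2^k\delta(Q)/Nt}$ once $N$ is enlarged to absorb $4/\theta$. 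The slabs $f_i^-$ are treated identically with $\xi=+\theta e_i$, and summing the $2n$ estimates (using $\sum_{i,\pm}\|f_i^{\pm}\|_2\le\sqrt{2n}\,\|f\|_2$) yields the first inequality of Lemma~\ref{L: Gaffney2}. The second inequality follows in exactly the same way, invoking the second displayed estimate of Lemma~\ref{L: Gaffney} and replacing $(1+t^2L)^{-1}$, $t\nabla(1+t^2L)^{-1}$ by $(1+t^2L)^{-1}t\nabla\cdot$, $t^2\nabla(1+t^2L)^{-1}\nabla\cdot$ and $f$, $f_i^{\pm}$ by $\mathbf f$, $\mathbf f_i^{\pm}$.

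I expect the only point requiring real thought to be the one already isolated: because $\operatorname{supp}f$ is an annular shell around $Q$, a single exponential weight cannot separate it from $Q$, which forces the splitting into finitely many one-sided slabs. Everything else is routine: computing $\sup_{x\in Q}e^{-x\cdot\xi/t}$ and $\sup_{\operatorname{supp}f_i^{\pm}}e^{y\cdot\xi/t}$ for $\xi=\mp\theta e_i$, the elementary bound $2^{k-1}-1/2\ge 2^{k-2}$ for $k\ge1$ that converts the weighted gain into the asserted $e^{-2^k\delta(Q)/Nt}$, and the observation that $e^{x\cdot\xi/t}f_i^{\pm}\in L^2(\Rn)$ (so that Lemma~\ref{L: Gaffney} genuinely applies) because $f_i^{\pm}$ is compactly supported.
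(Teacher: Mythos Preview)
Your proof is correct and follows essentially the same approach as the paper: decompose $f$ into $2n$ ``one-sided'' pieces according to which coordinate direction points away from $Q$, then apply Lemma~\ref{L: Gaffney} to each piece with $\xi=\mp\theta e_i$. The paper phrases the decomposition via the sets $A_i=\{\|x\|_\infty=x_i\}$, $B_i=\{\|x\|_\infty=-x_i\}$ (after centering $Q$ at the origin), but this is the same slab decomposition you describe, and the ensuing weight comparison is identical.
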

 \begin{proof}
 Without loss of generality we may assume that the cube $Q$ is centered at the origin and $2^kQ=\{x\in \R^n : \|x\|_\infty\le 2^{k-1}\delta(Q)\}$, when  $k\ge 1$. 
 Assume then that $f$ is supported in $2^{k+1}Q\setminus 2^kQ$ and write $\R^n\setminus\{0\}$ as the union of the sets, $\bigcup_{i=1}^nA_i\cup B_i$, where
\begin{equation*}
 A_i=\{x\in\R^n: \|x\|_\infty=x_i\}\ \text{and}\ B_i=\{x\in\R^n: \|x\|_\infty=-x_i\},\ i=1,\dots, n.
 \end{equation*}
 Then,
 \begin{equation*}
 \left(1+t^2L\right)^{-1}f=\sum_{i=1}^n\left(1+t^2L\right)^{-1}(f\chi_{A_i})+\left(1+t^2L\right)^{-1}(f\chi_{B_i})
 \end{equation*}
 and we show that the first inequality in the Lemma holds for each of these parts of $\left(1+t^2L\right)^{-1}f$. To get it for $\left(1+t^2L\right)^{-1}(f\chi_{A_1})$, apply Lemma \ref{L: Gaffney} to $f\chi_{A_1}$ with $\xi=-\theta e_1$, $e_1=\left(1,0,\dots,0\right)$ and observe that
 $e^{-\theta x_1/t}\ge e^{-\theta\delta(Q)/2t}$ inside $Q$ and $e^{-\theta x_1/t}\le e^{-\theta 2^k\delta(Q)/2t}$ inside $A_1\cap 2^{k+1}Q\setminus 2^kQ$.
 
 The other inequalities in Lemma \ref{L: Gaffney2} follow in the same way from Lemma \ref{L: Gaffney}.
 \end{proof}
For $f$ in $L^\infty(\R^n)$, define 
\begin{equation*}
\left(1+t^2L\right)^{-1}f=\lim_{R\to +\infty}\left(1+t^2L\right)^{-1}(f\chi_{B_R(x_0)}),
\end{equation*}
where $x_0$ is any point in $\R^n$ and the limit is taken in the $L^2_{loc}(\R^n)$-sense. The limit is well defined due to the Gaffney bounds in Lemma \ref{L: Gaffney2}, for if $x_1$ is any other point in $\R^n$, the symmetric difference between $B_R(x_0)$ and $B_R(x_1)$ is contained in an annulus $B_{2R}\setminus B_{\frac R2}$ for $R$ sufficiently large and
\begin{multline*}
\|\left(1+t^2L\right)^{-1}(f\chi_{B_R(x_0)})-\left(1+t^2L\right)^{-1}(f\chi_{B_R(x_1)})\|_{L^2(B_{\frac R4})}\\\le NR^{\frac n2}e^{-R/Nt}\|f\|_{L^\infty(\R^n)}.
\end{multline*}
Also, after writing
\begin{equation*}
\chi_{B_{R_2}\setminus B_{R_1}}=\chi_{B_{R_2}\setminus B_{2^{l+1}R_1}}+\sum_{i=0}^l
\chi_{B_{2^{i+1}R_1}\setminus B_{2^{i}R_1}}
\end{equation*}
when $2R<R_1<2^{l+1}R_1<R_2\le 2^{l+2}R_1$, it follows from Lemma \ref{L: Gaffney2} that
\begin{equation*}
\|\left(1+t^2L\right)^{-1}(f\chi_{B_{R_2}})-\left(1+t^2L\right)^{-1}(f\chi_{B_{R_1}})\|_{L^2(B_R)}\le Nt^{\frac n2 +1}R_1^{-1}\|f\|_{L^\infty(\R^n)},
\end{equation*}
which shows that $\left(1+t^2L\right)^{-1}(f\chi_{B_{R}})$ is a Cauchy sequence in $L^2_{loc}(\R^n)$, when $f$ is in $L^\infty(\R^n)$. Also, the Gaffney control that we have in Lemma \ref{L: Gaffney2} over the operator 
\begin{equation*}
t\nabla\left(1+t^2L\right)^{-1}
\end{equation*}
shows with similar reasonings that for $f$ in $L^\infty(\R^n)$,  $u=\left(1+t^2L\right)^{-1}f$ is a weak $H^1_{loc}(\R^n)$ solution over $\R^n$ to $u+t^2Lu=f$.

In particular, $\left(1+t^2L\right)^{-1}1=1$ and $\nabla\left(1+t^2L\right)^{-1}1=0$ in the above sense, because if $\eta_R(x)=\eta(x/R)$, with $\eta$ in $C_0^\infty(\R^n)$, $\eta=1$ in $B_1$ and $\eta=0$ outside $B_2$, $u_R= \left(1+t^2L\right)^{-1}(\eta_R)$ satisfies
\begin{equation*}
u_R-\eta_R+t^2L(u_R-\eta_R)=-t^2L\eta_R.
\end{equation*}
At the same time, the skew-symmetry of $\mathbf D$ implies that in the sense of distributions
\begin{equation*}
-t^2L\eta_R=t^2\nabla\cdot\left(\left(\mathbf A+\mathbf D-m_{B_{2R}}(\mathbf D)\right)\nabla\eta_R\right),\quad 
m_{B_{2R}}(\mathbf D)=\fint_{B_{2R}}\mathbf D\,dx.
\end{equation*}
Then,  the second inequality in Lemma \ref{L: Gaffney2} gives
\begin{equation*}
\begin{split}
&\|u_R-1\|_{L^2(B_{\frac R2})}+\|t\nabla u_R\|_{L^2(B_{\frac R2})}\\
&\le Nte^{-R/Nt}\|\mathbf A\nabla\eta_R+\left(\mathbf D-m_{B_{2R}}(\mathbf D)\right)\nabla\eta_R\|_{L^2(B_{2R})}\\&\le
NtR^{\frac n2 -1}e^{-R/Nt},
\end{split}
\end{equation*}
which tends to zero as $R$ tends to $+\infty$. The latter shows that the $L^2(\R^n)$-uniformly bounded operators $\mathcal T_t$ defined by \eqref{L: eloperador} verify Gaffney bounds, map $L^\infty(\R^n)$ into $L^2_{loc}(\R^n)$ and $\mathcal T_t(1)=0$, for $t>0$. 

For a Lipschitz function $f$ in $\R^n$, define in a similar manner
\begin{equation*}
\mathcal T_t(f)=\lim_{R\to +\infty}\mathcal T_t\left(\left(f-f(x_0)\right)\chi_{B_R(x_1)}\right),
\end{equation*}
where $x_0$ and $x_1$ are any points in $\R^n$. The limit is measured in the $L^2_{loc}(\R^n)$-sense and the definition is again independent of the choices of $x_0$ and $x_1$. Clearly, for $f$ Lipschitz, $\mathcal T_t(f)$ is a weak $H^1_{loc}(\R^n)$ solution over $\R^n$ to 
\begin{equation*}
\mathcal T_t(f)+t^2L\mathcal T_t(f)=t^2Lf.
\end{equation*}
This follows from the Gaffney bounds verified by the operators $\mathcal T_t$ and the following Lemma.

\begin{lemma}\label{L: otrolemita} Let $f$ be a Lipschitz function in $\R^n$ and $Q$ be a cube in $\R^n$ with $0<t\le\delta(Q)$. Then,
\begin{equation*}
\|\mathcal T_t(f)\|_{L^2(Q)}\le Nt|Q|^{\frac 12}\|\nabla f\|_{L^\infty(\R^n)}
\end{equation*}
and
\begin{equation*}
\|\nabla\mathcal T_t(f)\|_{L^2(Q)}\le N|Q|^{\frac 12}\|\nabla f\|_{L^\infty(\R^n)}.
\end{equation*}
\begin{proof} Let $x_Q$ denote the center of the cube $Q$. Write
\begin{equation*}
\begin{split}
&\mathcal T_t(f)=\lim_{R\to+\infty}\mathcal T_t\left(\left(f-f(x_Q)\right)\chi_{B_R(x_Q)}\right)\\
&=\mathcal T_t\big(\left(f-f(x_Q)\right)\eta_0\big)+
\sum_{k=0}^{+\infty}\mathcal T_t\big(\left(f-f(x_Q)\right)\left(\eta_{k+1}-\eta_k\right)\big),
\end{split}
\end{equation*}
where $\eta\in C_0^\infty(\R^n)$ is equal to $1$ in $2Q-x_Q$, $0$ outside $3Q-x_Q$ and $\eta_k(x)=\eta(x-x_Q/2^k)$, $k\ge 0$.
Then, the Gaffney bounds in Lemma \ref{L: Gaffney2} show that
\begin{multline}\label{E: que cosas}
\|\mathcal T_t(\left(f-f(x_Q)\right)\left(\eta_{k+1}-\eta_k\right))\|_{L^2(Q)}+\|t\nabla\mathcal T_t(\left(f-f(x_Q)\right)\left(\eta_{k+1}-\eta_k\right))\|_{L^2(Q)}\\
\le N2^{-k}\|\nabla f\|_{L^\infty(\R^n)}t|Q|^{1/2},
\end{multline}
when $k\ge 0$. Next, $u=\mathcal T_t\left(\left(f-f(x_Q)\right)\eta_0\right)$ is a weak $H^1(\R^n)$ solution to
\begin{equation*}
u+t^2Lu=-t^2\nabla\cdot\left(\mathbf A+\mathbf D\right)\nabla\left(\left(f-f(x_Q)\right)\eta_0\right)
\end{equation*}
and recalling that the distribution 
\begin{equation*}
\nabla\cdot\left(\mathbf D\nabla\left(\left(f-f(x_Q)\right)\eta_0\right)\right)
\end{equation*}
is the same as
\begin{equation*}
\nabla\cdot\left(\left(\mathbf D-m_Q(\mathbf D)\right)\nabla\left(\left(f-f(x_Q)\right)\eta_0\right)\right),
\end{equation*}
we find that 
\begin{equation*}
u=-\left(1+t^2L\right)^{-1}t^2\nabla\cdot\left[\left(\mathbf A+\left(\mathbf D - m_Q(\mathbf D)\right)\nabla\left(\left(f-f(x_Q)\right)\eta_0\right)\right)\right].
\end{equation*}
Then, the uniform boundedness of the last two operators in \eqref{E: operadores basicos} give
\begin{multline*}
\|u\|_{L^2(\R^n)}+\|t\nabla u\|_{L^2(\R^n)}\le Nt|Q|^{\frac 12}\left(1+\|\mathbf D\|_{BMO}\right)\|\nabla\left(\left(f-f(x_Q)\right)\eta_0\right)\|_{L^\infty(4Q)}\\
\le Nt|Q|^{\frac 12}\left(1+\|\mathbf D\|_{BMO}\right)\|\nabla f\|_{L^\infty(\R^n)},
\end{multline*}
and the Lemma follows after adding up \eqref{E: que cosas} and the last inequality.
\end{proof}
\end{lemma}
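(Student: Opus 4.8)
The plan is a localization argument in the spirit of \cite{AHLMT}, with the one genuinely new point being the treatment of the unbounded coefficient matrix $\mathbf D$. Since $\mathcal T_t(f)$ is defined (for Lipschitz $f$) as the $L^2_{loc}$-limit of $\mathcal T_t\big((f-f(x_Q))\chi_{B_R(x_Q)}\big)$, I may as well assume $f(x_Q)=0$. Fix $\eta\in C_0^\infty(\R^n)$ with $\eta\equiv 1$ on $2Q-x_Q$ and $\operatorname{supp}\eta\subset 3Q-x_Q$, put $\eta_k(x):=\eta\big((x-x_Q)/2^k\big)$ for $k\ge 0$, and decompose
\begin{equation*}
f=f\eta_0+\sum_{k\ge 0}f(\eta_{k+1}-\eta_k).
\end{equation*}
Each summand is Lipschitz with compact support, so $\mathcal T_t$ of it is the honest operator $I-(1+t^2L)^{-1}$; since $f\eta_0+\sum_{k=0}^{m}f(\eta_{k+1}-\eta_k)=f\eta_{m+1}\to f$ in the sense used to define $\mathcal T_t(f)$, the series $\mathcal T_t(f\eta_0)+\sum_k\mathcal T_t\big(f(\eta_{k+1}-\eta_k)\big)$ converges to $\mathcal T_t(f)$ in $L^2_{loc}(\R^n)$. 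Hence it suffices to bound $\|\mathcal T_t(g)\|_{L^2(Q)}$ and $\|\nabla\mathcal T_t(g)\|_{L^2(Q)}$ by $N\,2^{-k}t|Q|^{1/2}\|\nabla f\|_{L^\infty}$ and $N\,2^{-k}|Q|^{1/2}\|\nabla f\|_{L^\infty}$ respectively for the tail pieces $g=f(\eta_{k+1}-\eta_k)$, and without the $2^{-k}$ for the near piece $g=f\eta_0$; summing the geometric series then finishes the proof.

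\emph{The near piece.} Here $g_0:=f\eta_0$ lies in $H^1(\R^n)$, with $\operatorname{supp}g_0\subset 4Q$ and $\|\nabla g_0\|_{L^\infty}\lesssim\|\nabla f\|_{L^\infty}$ (using $|f|=|f-f(x_Q)|\lesssim\delta(Q)\|\nabla f\|_{L^\infty}$ on $\operatorname{supp}\nabla\eta_0$ and $|\nabla\eta_0|\lesssim\delta(Q)^{-1}$). Then $w_0:=\mathcal T_t(g_0)$ is the weak $H^1(\R^n)$ solution of $w_0+t^2Lw_0=t^2Lg_0=-t^2\nabla\cdot\big((\mathbf A+\mathbf D)\nabla g_0\big)$, and exactly as in the computation showing $\mathcal T_t(1)=0$ the skew-symmetry of $\mathbf D$ makes $\nabla\cdot\big(m_Q(\mathbf D)\nabla g_0\big)$ vanish as a distribution, so that
\begin{equation*}
w_0=-\big(1+t^2L\big)^{-1}t\,\nabla\cdot\Big(t\big(\mathbf A+\mathbf D-m_Q(\mathbf D)\big)\nabla g_0\Big).
\end{equation*}
By the John--Nirenberg inequality, $\|\mathbf D-m_Q(\mathbf D)\|_{L^2(4Q)}\lesssim|Q|^{1/2}\|\mathbf D\|_{BMO}$, so the vector field inside the divergence has $L^2(\R^n)$-norm $\lesssim t|Q|^{1/2}\big(1+\|\mathbf D\|_{BMO}\big)\|\nabla f\|_{L^\infty}$; the uniform $L^2$-boundedness of the operators $\big(1+t^2L\big)^{-1}t\nabla\cdot$ and $t^2\nabla\big(1+t^2L\big)^{-1}\nabla\cdot$ from \eqref{E: operadores basicos} then gives $\|w_0\|_{L^2}+\|t\nabla w_0\|_{L^2}\lesssim t|Q|^{1/2}\big(1+\|\mathbf D\|_{BMO}\big)\|\nabla f\|_{L^\infty}$, which is the asserted bound since $\|\mathbf D\|_{BMO}\le\lambda^{-1}$.

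\emph{The tail pieces.} For $g_k:=f(\eta_{k+1}-\eta_k)$ the same elementary estimates give $\|g_k\|_{L^\infty}\lesssim 2^k\delta(Q)\|\nabla f\|_{L^\infty}$, while $\operatorname{supp}g_k$ is contained in a union of a few dyadic shells $2^{j+1}Q\setminus 2^jQ$ with $j\ge k+1$ and total measure $\lesssim 2^{kn}|Q|$, whence $\|g_k\|_{L^2(\R^n)}\lesssim 2^{k(1+n/2)}\delta(Q)|Q|^{1/2}\|\nabla f\|_{L^\infty}$. Since $g_k\equiv 0$ on $Q$, on $Q$ one has $\mathcal T_t(g_k)=-\big(1+t^2L\big)^{-1}g_k$, and the Gaffney bounds of Lemma \ref{L: Gaffney2} give
\begin{equation*}
\|\mathcal T_t(g_k)\|_{L^2(Q)}+\|t\nabla\mathcal T_t(g_k)\|_{L^2(Q)}\lesssim e^{-2^k\delta(Q)/Nt}\,\|g_k\|_{L^2(\R^n)}.
\end{equation*}
Finally, with $0<t\le\delta(Q)$ both $2^k\ge 1$ and $\delta(Q)/t\ge 1$, so $2^k\delta(Q)/t\ge\tfrac12\big(2^k+\delta(Q)/t\big)$ and $e^{-2^k\delta(Q)/Nt}\le e^{-2^k/2N}e^{-\delta(Q)/2Nt}$; using $e^{-\delta(Q)/2Nt}\,\delta(Q)/t\lesssim 1$ and $e^{-2^k/2N}2^{k(2+n/2)}\lesssim 2^{-k}$ yields the $2^{-k}$ gain in the required bound.

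I expect the only real obstacle to be the unbounded coefficient $\mathbf D$: since $\mathbf D$ is merely $BMO$, the field $\mathbf D\nabla g_0$ is only locally $L^p$ and carries no pointwise bound, so the divergence-form rewriting and the resolvent estimates \eqref{E: operadores basicos} are of no use until one observes that $\nabla\cdot\big(C\nabla\,\cdot\,\big)\equiv 0$ for any constant skew-symmetric $C$. That is exactly what permits the local recentering $\mathbf D\rightsquigarrow\mathbf D-m_Q(\mathbf D)$ on the near piece and brings the John--Nirenberg inequality into play; with this in hand, the rest is routine bookkeeping of the Gaffney decay against the polynomial growth of the tail data.
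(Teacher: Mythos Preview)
Your proof is correct and follows essentially the same route as the paper: the same dyadic cutoff decomposition $f=f\eta_0+\sum_k f(\eta_{k+1}-\eta_k)$, the same Gaffney-based estimate on the tail pieces (you spell out the exponential-versus-polynomial bookkeeping a bit more explicitly than the paper does), and the same treatment of the near piece via the skew-symmetry cancellation $\nabla\cdot(m_Q(\mathbf D)\nabla g_0)=0$ followed by John--Nirenberg and the uniform $L^2$ bounds for $(1+t^2L)^{-1}t\nabla\cdot$ and $t^2\nabla(1+t^2L)^{-1}\nabla\cdot$. The only cosmetic difference is that you make the $L^2$ estimate $\|\mathbf D-m_Q(\mathbf D)\|_{L^2(4Q)}\lesssim|Q|^{1/2}\|\mathbf D\|_{BMO}$ explicit, whereas the paper absorbs it into the displayed inequality.
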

Next, we recall the following result in \cite[Lemma 3.9]{AAAHK}.
\begin{lemma}\label{L: elcachondeo} Let $\{\mathcal T_t: t>0\}$ be a family of bounded operators on $L^2(\R^n)$ satisfying for some $N>0$
\begin{enumerate}
\item $\sup_{t>0}{\|\mathcal T_t\|_{\mathcal B(L^2(\R^n))}}\le N$.
\item $\mathcal T_t$ verifies Gaffney bounds; i.e.  when $Q$ is a cube in $\R^n$ and $k\ge 1$
\begin{equation*}
 \|\mathcal T_t\left(f\chi_{2^{k+1}Q\setminus 2^kQ}\right)\|_{L^2(Q)}\le Ne^{-2^k\delta(Q)/Nt}\|f\chi_{2^{k+1}Q\setminus 2^kQ}\|_{L^2(\R^n)},
 \end{equation*}
 \item $\mathcal T_t(1)\equiv 0$ in $L^2_{loc}(\R^n)$.
 \end{enumerate}
 Then,
 \begin{equation*}
 \left(\int_{\R^{n+1}_+}|\tfrac 1t\mathcal T_t(f)|^2\tfrac{dxdt}t\right)^{\frac 12}\le N\left[1+\|\tfrac 1t\mathcal T_t(\Phi)\|_{C}\right]\|\nabla f\|_{L^2(\R^n)},
 \end{equation*}
 for all $f$ in $H^1(\R^n)$, where
 \begin{equation*}
 \|\tfrac 1t\mathcal T_t(\Phi)\|_{C}=\sup_{Q\subset\R^n}\left(\frac 1{|Q|}\int_{R_Q}|\tfrac 1t\mathcal T_t(\Phi)|^2\tfrac{dxdt}t\right)^{\frac 12},
 \end{equation*}
 $\Phi$ is the identity map of $\R^n$ and $R_Q$ the Carleson box $Q\times (0,\delta(Q))$.
\end{lemma}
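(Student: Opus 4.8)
The plan is the standard ``$T(1)$ for square functions'' scheme: since $\mathcal T_t(1)=0$, after a first--order Taylor expansion at scale $t$ the only part of $f$ that $\mathcal T_t$ does not annihilate is the linear part, which passes through $\mathcal T_t$ as $\gamma_t:=\mathcal T_t(\Phi)$; the Carleson size of $\gamma_t/t$ is exactly the quantity on the right-hand side, and the remaining ``second order'' piece is absorbed using the uniform $L^2$ bound $(1)$ together with the Gaffney decay $(2)$. First I would reduce to $f\in C_0^\infty(\R^n)$ by density, and truncate the $t$-integral to $\varepsilon<t<1/\varepsilon$, proving an $\varepsilon$-independent bound and letting $\varepsilon\downarrow 0$ at the end. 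Fix a radial $\varphi\in C_0^\infty$ with $\int\varphi=1$ and vanishing first moments, put $P_tg:=\varphi_t\ast g$ (so $P_t(1)=1$), and set $\gamma_t(x):=\mathcal T_t(\Phi)(x)\in\C^n$, obtained by applying $\mathcal T_t$ to each coordinate of the identity map; since the Carleson quantity on the right is finite, $\gamma_t\in L^2_{loc}(\R^n)$, which is what allows $\mathcal T_t$ to be applied to affine functions in what follows. Using $\mathcal T_t(1)=0$ twice, for each $(x,t)$ one has
\begin{equation*}
\tfrac1t\mathcal T_tf(x)=\tfrac1t\mathcal T_t(f-P_tf)(x)+\tfrac1t\,\mathcal T_t\big(P_tf-P_tf(x)-\nabla P_tf(x)\cdot(\Phi-x)\big)(x)+\tfrac1t\,\gamma_t(x)\cdot\nabla P_tf(x),
\end{equation*}
whose three terms I call $\mathrm I_t$, $\mathrm{II}_t$, $\mathrm{III}_t$; here $\mathrm{II}_t$ is $\tfrac1t\mathcal T_t$ applied to the second-order Taylor remainder of the smooth function $P_tf$ about $x$.

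The term $\mathrm I_t$ uses only hypothesis $(1)$: $\|\mathrm I_t\|_{L^2}\le N\|\tfrac1t(f-P_tf)\|_{L^2}$, and the classical Littlewood--Paley bound $\int_0^\infty\|\tfrac1t(f-P_tf)\|_{L^2}^2\tfrac{dt}t\le N\|\nabla f\|_{L^2}^2$ (Plancherel, using that the vanishing first moments give $1-\widehat\varphi(t\xi)=O(t^2|\xi|^2)$) yields $\int_{\RM}|\mathrm I_t|^2\tfrac{dxdt}t\le N\|\nabla f\|_{L^2}^2$. The term $\mathrm{III}_t$ uses only the Carleson hypothesis: the measure $d\mu(x,t):=|\tfrac1t\gamma_t(x)|^2\tfrac{dxdt}t=|\tfrac1t\mathcal T_t(\Phi)(x)|^2\tfrac{dxdt}t$ satisfies $\mu(R_Q)\le|Q|\,\|\tfrac1t\mathcal T_t(\Phi)\|_C^2$ for every cube $Q$, hence is a Carleson measure with constant $\le\|\tfrac1t\mathcal T_t(\Phi)\|_C^2$; since $|\nabla P_tf|=|\varphi_t\ast\nabla f|$ and its non-tangential maximal function are both $\le N\,M(\nabla f)$, Carleson's embedding inequality and the Hardy--Littlewood maximal theorem give $\int_{\RM}|\mathrm{III}_t|^2\tfrac{dxdt}t\le\int_{\RM}|\nabla P_tf|^2\,d\mu\le N\,\|\tfrac1t\mathcal T_t(\Phi)\|_C^2\,\|M(\nabla f)\|_{L^2}^2\le N\,\|\tfrac1t\mathcal T_t(\Phi)\|_C^2\,\|\nabla f\|_{L^2}^2$.

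The estimate $\int_{\RM}|\mathrm{II}_t|^2\tfrac{dxdt}t\le N\big(1+\|\tfrac1t\mathcal T_t(\Phi)\|_C\big)^2\|\nabla f\|_{L^2}^2$ is where essentially all the work sits, and is the step I expect to be the main obstacle; both $(1)$ and $(2)$ enter here. On a Whitney cube $Q$ with $\delta(Q)\sim t$ one rewrites $\mathrm{II}_t$ as $\tfrac1t\mathcal T_t$ of the Taylor remainder of $P_tf$ about the center $x_Q$ (the change of base point introduces only an affine correction, which through the $\gamma_t$-linear term costs the harmless extra factor $\|\tfrac1t\mathcal T_t(\Phi)\|_C$), decomposes this remainder over $CQ$ and the dyadic annuli $2^{k+1}Q\setminus 2^kQ$, bounds the piece on $CQ$ by $\lesssim t\,\|\psi_t\ast\nabla f\|_{L^2(CQ)}$ via second-order Poincaré, using $\nabla^2P_tf=t^{-1}\psi_t\ast\nabla f$ with $\psi=\nabla\varphi$ of mean zero (crucially keeping only $\nabla f$, never $f$, in the estimate), and controls the far pieces against the Gaffney factor $e^{-2^k\delta(Q)/Nt}=e^{-c2^k}$. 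The crucial point, which the naive scale-by-scale bound misses --- it would only produce a divergent $\int\tfrac{dt}t$ --- is a quasi-orthogonality (Cotlar--Schur type) argument across the scales $s$ and $t$: combining $\mathcal T_t(1)=0$, the Gaffney decay, and the mean-zero cancellation of $\psi_t\ast\nabla f$ one gains a factor $(t/s)^\varepsilon+(s/t)^\varepsilon$ that, together with the Littlewood--Paley identity $\int_0^\infty\|\psi_t\ast\nabla f\|_{L^2}^2\tfrac{dt}t\lesssim\|\nabla f\|_{L^2}^2$, closes the estimate. Finally, adding the three contributions, taking square roots, removing the $\varepsilon$-truncation by monotone convergence, and passing from $C_0^\infty(\R^n)$ to $H^1(\R^n)$ by density yields the Lemma.
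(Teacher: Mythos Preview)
The paper does not prove this lemma at all; it is simply quoted as \cite[Lemma~3.9]{AAAHK} and used as a black box. So there is no ``paper's own proof'' to compare against --- your sketch \emph{is} essentially the argument behind that reference.

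Your decomposition $\mathrm I+\mathrm{II}+\mathrm{III}$ is the standard $T(1)$-for-square-functions scheme. Terms $\mathrm I$ (uniform $L^2$ bound plus the Littlewood--Paley estimate for $(f-P_tf)/t$, which does require the vanishing first moments that radiality gives) and $\mathrm{III}$ (Carleson embedding against $|\nabla P_tf|\le N\,M(\nabla f)$) are handled correctly. Your identification of $\mathrm{II}$ as the crux is right, and so is your diagnosis: bounding the second-order remainder scale-by-scale via Gaffney and $\|\nabla^2P_tf\|\lesssim t^{-1}\|\psi_t\ast\nabla f\|$ yields only a \emph{uniform}-in-$t$ estimate, hence a divergent $\int dt/t$; one must exploit that the operator $\mathcal T_t-\gamma_t\cdot\nabla P_t$ annihilates affines together with the mean-zero of $\psi_s$ to produce an almost-orthogonality gain $\min(s/t,t/s)^\alpha$ across scales, exactly as you describe. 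One small point worth making explicit: the radial (even) assumption on $\varphi$ is what forces $P_t\Phi=\Phi$, and hence ensures that $\mathcal T_t-\gamma_t\cdot\nabla P_t$ really kills $\Phi$; this is the hinge on which the whole decomposition turns, and is implicit but not stated in your setup. The base-point change on Whitney cubes does, as you note, reintroduce a $\gamma_t$-linear correction and accounts for the factor $(1+\|\tfrac1t\mathcal T_t(\Phi)\|_C)$ in the bound for $\mathrm{II}$.
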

Hence, as in \cite{AHLMT}, Lemma \ref{L: elcachondeo} implies that
\begin{equation}\label{E: otra importante}
\left(\int_0^{+\infty}\|\left(1+t^2L\right)^{-1}tLf\|_{L^2(\R^n)}^2\tfrac{dt}t\right)^{\frac12}\le N\|\nabla f\|_{L^2(\R^n)}
\end{equation}
after one shows with $\mathcal T_t$ as in \eqref{L: eloperador}, that the measure
\begin{equation*}
|\tfrac 1t\mathcal T_t(\Phi)|^2\tfrac{dxdt}t
\end{equation*}
is a Carleson measure with 
\begin{equation}\label{E: condición Carleson}
\|\tfrac 1t\mathcal T_t(\Phi)\|_{C}\le N
\end{equation} 
and \eqref{eq1.3} for $f$ in $\mathcal D(L^2)$ follows from \eqref{E: paso fundamental} and \eqref{E: otra importante}.

To obtain \eqref{E: condición Carleson}, it suffices to adapt the construction of \cite{HLMc} to verify a variant of the $T(b)$ theorem for square roots \cite{AT}: for a fixed cube $Q$ in $\R^n$, $0<\e< 1$ and $\xi$ a unit vector  in $\C^n$, define the scalar-valued function
\begin{equation}\label{E: elecion}
f^\epsilon_{Q,\xi}=\Phi_Q\cdot\xi-\mathcal T_t\left(\Phi_Q\cdot\xi\right),
\end{equation}
with $\Phi_Q(x)=x-x_Q$ and $t=\e \delta(Q)$. Then, if follows from Lemma \ref{L: otrolemita} with $Q$ replaced by $10Q$, $t=\e\delta(Q)$ and $f=\Phi_Q\cdot\xi$ that
\begin{equation}\label{E: propiedad1}
\left(\fint_{10Q}|f^\epsilon_{Q,\xi}-\Phi_Q\cdot\xi|^2\,dx\right)^{\frac 12}\le N\e\delta(Q),
\end{equation}
\begin{equation}\label{E: propiedad2}
\left(\fint_{10Q}|\nabla f^\epsilon_{Q,\xi}-\xi|^2\,dx\right)^{\frac 12}\le N.
\end{equation}
Also $f^\epsilon_{Q,\xi}$ is a weak $H^1_{loc}(\R^n)$ solution to $f^\epsilon_{Q,\xi}+t^2Lf^\epsilon_{Q,\xi}=\Phi_Q\cdot\xi$ over $\R^n$, with $t=\e \delta(Q)$ and 
\begin{equation}\label{E: propiedad3}
\left(\fint_{10Q}|Lf^\epsilon_{Q,\xi}|^2\,dx\right)^{\frac 12}\le N/\left(\e\delta(Q)\right),
\end{equation}

The reasonings in \cite[Lemma 5.4]{AHLMT} show that given functions $f^\epsilon_{Q,\xi}$ in $H^1_{loc}(\R^n)$ verifying \eqref{E: propiedad1} and \eqref{E: propiedad2} for some $N>0$, there is $0<\e\le 1$, $\e=\e(N,n)$ and a finite set $W$ of unit vectors in $\C^n$, whose cardinality depends only on $\e$ and $n$, such that the inequality
\begin{equation}\label{E: ojofundamento}
\|\Psi\|_{C}\le N\sum_{\xi\in W}\sup_{Q\subset\R^n}\left(\frac1{|Q|}\int_{R_Q}|\Psi\cdot S^Q_t(\nabla f^\e_{Q,\xi})|^2\tfrac{dxdt}t\right)^{\frac 12},
\end{equation}
holds for all measurable functions $\Psi:\R^{n+1}\longrightarrow\C^n$ in $L^2_{loc}(\R^{n+1}_+)$, where for each $Q$ cube in $\R^n$, $S^Q_t$ denotes the dyadic averaging operator associated to the dyadic mesh generated by $Q$; i.e.
\begin{equation*}
S^Q_t(h)(x)=\fint_{Q'}h(y)\ dy,
\end{equation*}
for $x$ in the dyadic cube $Q'$ with $\frac 12\delta(Q')<t\le\delta(Q')$. In fact, the proof of \eqref{E: ojofundamento} in \cite[Lemma 5.4]{AHLMT} uses the compactness of the unit sphere in $\C^n$, properties of the distance function in $\C^n$, H\"older's inequality, the boundedness of the Hardy-Littlewood maximal function in $L^2(\Rn)$, a suitable stopping time argument independent of $\Psi$ and the interpolation inequality in \cite[Lemma 5.15]{AHLMT}. Thus, its proof  is independent of the choice of $\Psi$ and \eqref{E: ojofundamento} holds with $\Psi=\tfrac 1t\mathcal T_t(\Phi)$, when $L$ is as in Theorem \ref{thsecondorder} and for the choice of functions $f^\e_{Q,\xi}$ defined in \eqref{E: elecion}.

  Then, \eqref{E: condición Carleson} follows from \eqref{E: ojofundamento} with $\Psi=\tfrac 1t\mathcal T_t(\Phi)$ and Lemma \ref{E: fundamental2} below, which adapts \cite[Lemma 5.5]{AHLMT} to the more general hypothesis on the coefficients matrix of $L$ in Theorem \ref{thsecondorder}.
\begin{lemma}\label{E: fundamental2} Let $\e=\e(N,n)$ be the choice of $\e$ in \eqref{E: ojofundamento}. Then, there is $N>0$ such that
\begin{equation*}
\left(\frac1{|Q|}\int_{R_Q}|\tfrac1t\mathcal T_t(\Phi)\cdot S^Q_t(\nabla f^\e_{Q,\xi})|^2\tfrac{dxdt}t\right)^{\frac 12}\le N,
\end{equation*}
for all cubes $Q$ in $\R^n$ and $\xi$ a unit vector in $\C^n$
\end{lemma}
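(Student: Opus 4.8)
The plan is to follow the template of \cite[Lemma 5.5]{AHLMT} almost verbatim, using the key algebraic identity that relates $\frac 1t\mathcal T_t(\Phi)\cdot S^Q_t(\nabla f^\e_{Q,\xi})$ to quantities we already control. First I would write the integrand, on the Carleson box $R_Q=Q\times(0,\delta(Q))$, by splitting into the region where $t$ is comparable to the local scale of the dyadic mesh. The point is that on a dyadic subcube $Q'$ of $Q$ with $\tfrac12\delta(Q')<t\le\delta(Q')$, the averaging operator $S^Q_t$ replaces $\nabla f^\e_{Q,\xi}$ by its mean on $Q'$; combining \eqref{E: propiedad1}, \eqref{E: propiedad2} and a Poincar\'e-type estimate we have $S^Q_t(\nabla f^\e_{Q,\xi})= \xi + O(1)$ in a controlled $L^2$ sense on $Q'$, so that, after absorbing the harmless $O(1)$ error using the uniform $L^2$ and Carleson bounds on $\tfrac1t\mathcal T_t(\Phi)$, we are reduced to estimating
\begin{equation*}
\frac1{|Q|}\int_{R_Q}\Big|\tfrac1t\mathcal T_t(\Phi)\cdot\xi\Big|^2\tfrac{dxdt}t
=\frac1{|Q|}\int_{R_Q}\Big|\tfrac1t\mathcal T_t(\Phi\cdot\xi)\Big|^2\tfrac{dxdt}t .
\end{equation*}

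Next I would use the defining relation $\mathcal T_t(\Phi\cdot\xi)=\Phi\cdot\xi-f^\e_{Q,\xi}$ from \eqref{E: elecion} together with the equation $f^\e_{Q,\xi}+t^2Lf^\e_{Q,\xi}=\Phi_Q\cdot\xi$ to replace $\tfrac1t\mathcal T_t(\Phi\cdot\xi)$ by $t\,Lf^\e_{Q,\xi}$ modulo the difference between $\Phi$ and $\Phi_Q$, which is a constant and hence annihilated by $\mathcal T_t$ in the sense established earlier in the paper ($\mathcal T_t(1)=0$). This recasts the Carleson estimate as
\begin{equation*}
\frac1{|Q|}\int_{R_Q}\big|t\,Lf^\e_{Q,\xi}\big|^2\tfrac{dxdt}t\lesssim N,
\end{equation*}
which is exactly the type of bound one obtains from the interior estimate \eqref{E: propiedad3}, i.e.\ $\big(\fint_{10Q}|Lf^\e_{Q,\xi}|^2\big)^{1/2}\le N/(\e\delta(Q))$, applied on each scale $t=\e\delta(Q')$ for the dyadic subcubes $Q'$, summed over the dyadic decomposition of $R_Q$ and using $\sum_{Q'}|Q'|=|Q|$ at each generation together with the geometric decay in $t$ built into the $dt/t$ measure. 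The only modification relative to \cite{AHLMT} is that the solution $f^\e_{Q,\xi}$ is now the solution for the operator with the $\mathbf D$-term; but \eqref{E: propiedad1}, \eqref{E: propiedad2} and \eqref{E: propiedad3} were already derived above from Lemma \ref{L: otrolemita}, whose proof used \eqref{E:4} precisely to handle $\mathbf D$, so nothing new is needed here.

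The main obstacle, and the step that requires genuine care rather than bookkeeping, is controlling the commutator error $\tfrac1t\mathcal T_t(\Phi)\cdot\big(S^Q_t(\nabla f^\e_{Q,\xi})-\xi\big)$: one must show its Carleson norm over $R_Q$ is bounded, and the natural route is to split $S^Q_t(\nabla f^\e_{Q,\xi})-\xi=S^Q_t(\nabla f^\e_{Q,\xi}-\nabla f^\e_{Q',\xi})+(S^Q_t-I)(\nabla f^\e_{Q',\xi})+(\nabla f^\e_{Q',\xi}-\xi)$ across the relevant subcubes $Q'$, bounding the first term by Lemma \ref{L: otrolemita} applied at scale $Q'$, the second by a standard estimate for the dyadic averaging operator against $H^1_{loc}$ functions, and the third by \eqref{E: propiedad2} rescaled to $Q'$; each of these is paired with $\tfrac1t\mathcal T_t(\Phi)$ via Cauchy--Schwarz and the uniform bound $\sup_t\|\mathcal T_t\|_{\mathcal B(L^2)}\le N$ from Lemma \ref{L: elcachondeo}(1), after which summation over the dyadic tree closes the estimate. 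Once this error is absorbed, the desired inequality follows, completing the proof of Lemma \ref{E: fundamental2} and hence of \eqref{E: condici�n Carleson}.
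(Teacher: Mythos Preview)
Your proposal has two fatal gaps. First, the step where you ``absorb the harmless $O(1)$ error using the uniform $L^2$ and Carleson bounds on $\tfrac1t\mathcal T_t(\Phi)$'' is circular: the Carleson bound $\|\tfrac1t\mathcal T_t(\Phi)\|_C\le N$ is precisely \eqref{E: condici�n Carleson}, which Lemma~\ref{E: fundamental2} is being used to prove via \eqref{E: ojofundamento}. And the error $S^Q_t(\nabla f^\e_{Q,\xi})-\xi$ is not small; \eqref{E: propiedad2} only gives it size $O(1)$ in normalized $L^2$, so pairing it with $\tfrac1t\mathcal T_t(\Phi)$ reproduces essentially the full unknown Carleson norm. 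Second, your ``defining relation $\mathcal T_t(\Phi\cdot\xi)=\Phi\cdot\xi-f^\e_{Q,\xi}$'' and the equation ``$f^\e_{Q,\xi}+t^2Lf^\e_{Q,\xi}=\Phi_Q\cdot\xi$'' hold only at the single fixed value $t=\e\delta(Q)$ appearing in \eqref{E: elecion}, not for the running $t\in(0,\delta(Q))$ in the Carleson integral; $f^\e_{Q,\xi}$ does not depend on that integration variable, so you cannot identify $\tfrac1t\mathcal T_t(\Phi\cdot\xi)$ with $t\,Lf^\e_{Q,\xi}$ across $R_Q$. Swapping in $f^\e_{Q',\xi}$ at each dyadic scale does not help, because the lemma fixes the function attached to the top cube $Q$.

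The paper instead follows the $I+II+III$ splitting of \cite[Lemma 5.5]{AHLMT}: after localizing with a cutoff $\chi$, one inserts a smooth mollifier $P_t$ and writes $\tfrac1t\mathcal T_t(\Phi)\cdot S_t(\nabla(\chi f))$ as $\tfrac1t\mathcal T_t(\Phi)\cdot(S_t-P_t^2)(\nabla(\chi f))$ plus $\tfrac1t\big(\mathcal T_t(\Phi)\cdot\nabla P_t^2-\mathcal T_t\big)(\chi f)$ plus $\tfrac1t\mathcal T_t(\chi f)$. Terms $I$ and $II$ are handled verbatim as in \cite{AHLMT} using only \eqref{E: fundamental3}. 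The genuinely new ingredient, which your proposal misses entirely, is term $III$: because $\mathbf D$ is unbounded, commuting $L$ past $\chi$ produces a factor $(\mathbf D-m_{4Q}(\mathbf D))\nabla f\cdot\nabla\chi$, and bounding this in $L^2(4Q)$ forces one to prove higher integrability $\nabla f^\e_{Q,\xi}\in L^p(5Q)$ for some $p>2$, obtained by a Caccioppoli--reverse H\"older argument adapted to the BMO drift. Your claim that ``nothing new is needed here'' beyond \eqref{E: propiedad1}--\eqref{E: propiedad3} is therefore incorrect.
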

\begin{proof} Fix $Q$, $\xi$ in $\C^n$ with $|\xi|=1$ and make $\e=\e(N,n)$. 
Let $\chi$ be in $C_0^\infty(4Q)$ with $\chi=1$ in $2Q$, $\chi=0$ outside $3Q$ and
\begin{equation*}
\|\chi\|_\infty+\delta(Q)\|\nabla X\|_\infty\le N.
\end{equation*}
To simplify the notation set $f=f^{\e}_{Q,\xi}$ and $S_t=S^Q_t$. Then,
\begin{equation*}
\|\tfrac1t\mathcal T_t(\Phi)\cdot S_t(\nabla f)\|_{L^2(R_Q,dxdt/t)}= \|\tfrac1t\mathcal T_t(\Phi)\cdot S_t(\nabla\left(\chi f\right))\|_{L^2(R_Q,dxdt/t)} 
\end{equation*}
because $\nabla(\chi f)=\nabla f$ over $2Q$ and $S_t(\nabla f)$ only reads information about $\nabla f$ inside $Q$ to calculate its values at points $(x,t)$ in $R_Q$. Next, let $P_t$ denote the convolution with an even smooth mollifier, $\theta_t(x)=t^{-n}\theta(x/t)$, $\theta$ with integral $1$ and supported in $B_1$. We have
\begin{equation*}
\begin{split}
&\|\tfrac1t\mathcal T_t(\Phi)\cdot S_t(\nabla\left(\chi f\right))\|_{L^2(R_Q,dxdt/t)}\\
&\le \|\tfrac1t\mathcal T_t(\Phi)\cdot (S_t-P_t^2)\left(\nabla\left(\chi f\right)\right)\|_{L^2(\R^{n+1}_+,dxdt/t)}\\
&+\|\tfrac1t\left(\mathcal T_t(\Phi)\cdot \nabla P_t^2-\mathcal T_t\right) \left(\chi f\right)\|_{L^2(\R^{n+1}_+,dxdt/t)}+\|\tfrac1t\mathcal T_t(\chi f)\|_{L^2(R_Q,dxdt/t)}\\
&= I+II+III.
\end{split}
\end{equation*}
Then, $I$ and $II$ in the right hand side above are handled exactly as its analogues in \cite[Lemma 5.5]{AHLMT}. In particular, the only information about $f$ that one needs to bound $I$ and $II$ by $N\sqrt{|Q|}$ is that \eqref{E: propiedad1} and \eqref{E: propiedad2} imply the bound 
\begin{equation}\label{E: fundamental3}
\left(\fint_{5Q}|\nabla\left(\chi f\right)|^2\ dx\right)^{\frac 12}\le N
\end{equation} 
and it suffices to apply the same harmonic analysis techniques, which allow to handle the operators $\tfrac1t\mathcal T_t(\Phi)\cdot (S^Q_t-P_t^2)$ and $\tfrac1t\left(\mathcal T_t(\Phi)\cdot \nabla P_t^2-\mathcal T_t\right)$ in \cite[Lemma 5.5]{AHLMT}. In particular, for $I$ use that $S_t$ is a projection operator; i.e. $S_t^2 =S_t$,
\begin{multline*}
\|\tfrac1t\mathcal T_t(\Phi)\cdot (S_t-P_t^2)\left(\nabla\left(\chi f\right)\right)\|_{L^2(\R^{n+1}_+,dxdt/t)}\\
=\|\tfrac 1t\mathcal T_t(\Phi)\cdot(S_t+P_t)(S_t-P_t)\left(\nabla(\chi f)\right)\|_{L^2(\R^{n+1}_+,dxdt/t)}
\end{multline*}
that $\tfrac 1t\mathcal T_t(\Phi)\cdot (S_t+P_t)$ is a bounded operator on $L^2(\R^n)$ because the point wise bounds of the kernel of $S_t+P_t$ and duality show that
\begin{equation*}
\|\tfrac 1t\mathcal T_t(\Phi)(S_t+P_t)\|_{\mathcal B(L^2(\R^n))}\le N\|P_{Nt}\left(|\tfrac 1t\mathcal T_t(\Phi)|^2P_{Nt}\right)\|^{\frac 12}_{\mathcal B(L^2(\R^n))},
\end{equation*} 
while the first inequality in Lemma \ref{L: otrolemita} implies that
\begin{equation*}
\left(\fint_{B_{2Nt}(x)}|\tfrac 1t\mathcal T_t(\Phi)|^2\,dx\right)^{\frac 12}\le N,
\end{equation*}
which shows that the kernel of $P_{Nt}\left(|\tfrac 1t\mathcal T_t(\Phi)|^2P_{Nt}\right)$ is bounded by $Nt^{-n}\chi_{|x-y|\le 4Nt}$. Finally, the proof of the inequality
\begin{equation*}
\left(\int_{\R^{n+1}_+}|(S_t-P_t)(h)|^2\tfrac{dxdt}t\right)\le N\|h\|_{L^2(\R^n)},\ \text{for}\ h\in L^2(\R^n),
\end{equation*}
is explained in \cite{J} or \cite[pp. 168 and 172-173]{AT}. The bound for $II$ follows from \eqref{E: fundamental3} and Lemma \ref{L: elcachondeo} applied to the family of operators $\mathcal T_t(\Phi)\cdot \nabla P_t^2-\mathcal T_t$, which are uniformly bounded in $L^2(\R^n)$, verify Gaffney bounds and map $1$ and $\Phi$ to zero.

In order to bound $III$, the presence of the $BMO(\R^n)$ matrix $\mathbf D$, obliges to use some additional information about the gradient of $f=f^\e_{Q.\xi}$. In particular,  local higher integrability; i.e. there is $p=p(\lambda,n)>0$  independent of $\delta(Q)$ and $\xi$ such that
\begin{equation}\label{E:higherintegrability}
\left(\fint_{5Q}|\nabla f^\e_{Q,\xi}|^p\, dx\right)^{\frac 1p}\le N.
\end{equation}
Once the later is known, the skew-symmetry of $\mathbf D$ implies that as a distribution
\begin{equation*}
L\left(\chi f\right)=\chi Lf-\nabla\cdot\left(f\left(\mathbf A+\mathbf D-m_{4Q}(\mathbf D)\right)\nabla\chi\right)-\left(\mathbf A+\mathbf D-m_{4Q}(\mathbf D)\right)\nabla f\cdot\nabla\chi
\end{equation*}
and
\begin{equation}\label{E:necesario0}
\begin{split}
&\tfrac 1t{\mathcal T_t}(\chi f)=\left(1+t^2L\right)^{-1}tL\left(\chi f\right)\\
&=t\left(1+t^2L\right)^{-1}\left(\chi Lf\right)\\
&-\left(1+t^2L\right)^{-1}t\left[\nabla\cdot\left(f\left(\mathbf A+\mathbf D-m_{4Q}(\mathbf D)\right)\nabla\chi\right)+\left(\mathbf A+\mathbf D-m_{4Q}(\mathbf D)\right)\nabla f\cdot\nabla\chi\right]
\end{split}
\end{equation}
Then, \eqref{E: operadores basicos} and \eqref{E: propiedad3} give
\begin{equation}\label{E:necesaria}
\|t\left(1+t^2L\right)^{-1}\left(\chi Lf\right)\|_{L^2(Q)}\le Nt\delta(Q)^{-1}\sqrt{|Q|},
\end{equation}
while the Gaffney bounds in Lemma \ref{L: Gaffney2}, \eqref{eq1.1}, \eqref{E: condicion2}, \eqref{E:higherintegrability} \eqref{E: propiedad1}, \eqref{E: propiedad2}, H\"older's inequality and the Poincar\'e-Sobolev inequality over $4Q$, imply that for $0<t\le\delta(Q)$
\begin{equation}\label{E:necesario2}
\begin{split}
& \| \left(1+t^2L\right)^{-1}t\nabla\cdot\left( f\left(\mathbf A+\mathbf D-m_{4Q}(\mathbf D)\right)\nabla\chi\right)\|_{L^2(Q)}\\
&+\| \left(1+t^2L\right)^{-1}t\left[\left(\mathbf A+\mathbf D-m_{4Q}(\mathbf D)\right)\nabla f\cdot\nabla\chi\right] \|_{L^2(Q)}\\
&\le e^{-\delta(Q)/Nt}\delta(Q)^{-1} \left[\|\left(\mathbf A+\mathbf D-m_{4Q}(\mathbf D)\right)\left(f-m_{4Q}(f)\right)\|_{L^2(4Q)}+|m_{4Q}(f)|\sqrt{|Q|}\right]\\
&+ e^{-\delta(Q)/Nt} t\delta(Q)^{-1} \|\left(\mathbf A+\mathbf D-m_{4Q}(\mathbf D)\right)\nabla f\|_{L^2(4Q)}\le Ne^{-\delta(Q)/2Nt}\sqrt{|Q|}.
\end{split}
\end{equation}
Finally, \eqref{E:necesario0}, \eqref{E:necesaria}  and \eqref{E:necesario2} show that $III$ is also bounded by $N\sqrt{|Q|}$, which proves Lemma \ref{E: fundamental2}.

It only remains to show that \eqref{E:higherintegrability} holds; but this follows from \eqref{E: propiedad1} and \eqref{E: propiedad2} and standard higher integrability methods \cite{Gehring,Stredulinsky,SereginSilvestreSverakZlatos12} because $f=f^\e_{Q,\xi}$ is a weak $H^1_{loc}(\R^n)$ solution to $f+t^2Lf=\Phi_Q\cdot\xi$ over $\R^n$, with $t=\e \delta(Q)$. We include the details for the reader's convenience:

When $B_{2r}$ is  any ball, multiply the equation 
\begin{equation*}
-t^2\nabla\cdot\left(\left(\mathbf A+\mathbf D\right)\nabla f\right)=\Phi_Q\cdot\xi-f
\end{equation*}
by $\left(\overline f-m_{B_{2r}}(\overline f)\right)\eta^2$, with $\eta=1$ over $B_r$, $\eta$ in $C_0^{\infty}(B_{2r})$. It yields
\begin{multline*}
\int t^2\mathbf A\nabla f\cdot\nabla\overline f\eta^2+2t^2\left(\mathbf A+\mathbf D\right)\nabla f\cdot\nabla\eta\left(\overline f-m_{B_{2r}}(\overline f)\right)\eta\,dx\\
=\int\left(\Phi_Q\cdot\xi-f\right)\left(\overline f-m_{B_{2r}}(\overline f)\right)\eta^2\, dx.
\end{multline*}
Taking real parts,  dividing by $t^2$ and using the cancellations provided by the skew-symmetry of the matrix $m_{B_{2r}}(\mathbf D)$, one gets
\begin{multline*}
\int |\nabla f|^2\eta^2\, dx\le r^{-2}\int_{B_{2r}}\left(1+|\mathbf D-m_{B_{2r}}(\mathbf D)|^2\right)|f-m_{B_{2r}}(f)|^2\, dx\\
+\left(\int_{B_{2r}}|\left(\Phi_Q\cdot\xi-f\right)t^{-2}|^{\frac{2n}{n+2}}\, dx\right)^{\frac{n+2}{n}}.
\end{multline*}
Next, by H\"older's inequality and a Sobolev-Poincar\'e inequality
\begin{multline*}
\int_{B_{2r}}|\mathbf D-m_{B_{2r}}(\mathbf D)|^2|f-m_{B_{2r}}(f)|^2\, dx\\
\le \left(\int_{B_{2r}}|\mathbf D-m_{B_{2r}}(\mathbf D)|^{2n}\ dx\right)^{\frac 1n}\left(\int_{B_{2r}}|f-m_{B_{2r}}(f)|^{\frac {2n}{n-1}}\, dx\right)^{\frac{n-1}n}\\
\le Nr\left(\int_{B_{2r}}|\nabla f|^{\frac{2n}{n+1}}\, dx\right)^{\frac{n+1}n}.
\end{multline*}
Hence, recalling that $t=\e\delta(Q)$, one gets
\begin{multline}\label{E: reverseholder}
\left(\fint_{B_r}|\nabla f|^2\, dx\right)^{\frac 12}\le N\left(\fint_{B_{2r}}|\nabla f|^{\frac{2n}{n+1}}\, dx\right)^{\frac{n+1}{2n}}\\+N\left(\fint_{B_{2r}}|\left(\Phi_Q\cdot\xi-f\right)\delta(Q)^{-1}|^{\frac{2n}{n+2}}\, dx\right)^{\frac{n+2}{2n}},
\end{multline}
\noindent when $B_{2r}$ is any ball contained in  $10Q$. From \cite{Stredulinsky}  and \eqref{E: reverseholder}, there is some $p=p(\lambda, n)>2$ such that
\begin{multline}\label{E: reverseholder2}
\left(\fint_{5Q}|\nabla f|^p\, dx\right)^{\frac 1p}\le N\left(\fint_{10Q}|\nabla f|^2\, dx\right)^{\frac 12}\\+N\delta(Q)^{-1}\left(\fint_{10Q}|\Phi_Q\cdot\xi-f|^p\, dx\right)^{\frac 1p},
\end{multline}
Finally, we may assume that $2<p<\frac {2n}{n-2}$ and the interpolation of \eqref{E: propiedad1} and \eqref{E: propiedad2} shows that the second term in the right hand of \eqref{E: reverseholder2} is bounded by $N$, while \eqref{E: propiedad2} implies that the same holds with the first term.
\end{proof}
\end{section}

\end{document}